\title{Asymptotic distributions and chaos for the supermarket model}
\date{}

\documentclass[10pt]{article}

\usepackage{amssymb}
\usepackage{latexsym}

\newcommand{\m}[1]{\marginpar{\tiny{#1}}}

\newenvironment{proof}{\noindent {\bf
Proof}.}{\proofbox\par\smallskip\par}
\newenvironment{proofof}[1]{\noindent {\bf
Proof of #1}.}{\proofbox\par\smallskip\par}

\newtheorem{theorem}{Theorem}[section]
\newtheorem{lemma}[theorem]{Lemma}
\newtheorem{proposition}[theorem]{Proposition}

\newtheorem{corollary}[theorem]{Corollary}

\newcommand{\halmos}{\rule{1ex}{1.4ex}}
\newcommand{\proofbox}{\hspace*{\fill}\mbox{$\halmos$}}

\newcommand{\reals}{\mathbb{R}}
\newcommand{\nats}{\mathbb{N}}

\newcommand{\ints}{\mathbb{Z}}

\newcommand{\ind}{\mathbb{I}}

\newcommand{\E}{\mbox{\bf E}}
\newcommand{\pr}{\mbox{\bf Pr}}

\newcommand{\dtv}{d_{\mbox{\tiny TV}}}
\newcommand{\dw}{d_{\mbox{\tiny W}}}

\newcommand{\none}[1]{\parallel\!{#1}\!\parallel_1}
\newcommand{\ninf}[1]{\parallel\!{#1}\!\parallel_{\infty}}

\author{Malwina J Luczak \thanks{supported in part by the Nuffield Foundation}\\
Department of Mathematics, London School of Economics,\\ Houghton
Street, London WC2A 2AE, UK.\\ e-mail: m.j.luczak@lse.ac.uk
  \and Colin McDiarmid\\
  Department of Statistics, University of Oxford,\\
  1 South Parks Road, Oxford OX1 3TG, UK.\\
  e-mail: cmcd@stats.ox.ac.uk}

\begin{document}

\maketitle

\begin{abstract}
In the supermarket model there are $n$ queues, each with a unit
rate server. Customers arrive in a Poisson process at rate
$\lambda n$, where $0<\lambda<1$.  Each customer chooses $d \geq
2$ queues uniformly at random, and joins a shortest one.

It is known that the equilibrium distribution of a typical queue
length converges to a certain explicit limiting distribution as $n
\to \infty$. We quantify the rate of convergence by showing that
the total variation distance between the equilibrium distribution
and the limiting distribution is essentially of order $n^{-1}$;
and we give a corresponding result for systems starting from quite
general initial conditions (not in equilibrium). Further, we
quantify the result that the systems exhibit chaotic behaviour: we
show that the total variation distance between the joint law of a
fixed set of queue lengths and the corresponding product law is
essentially of order at most $n^{-1}$.
\end{abstract}

\vfill

{{\it Key words and phrases.} Supermarket model, join the shortest
queue, random choices, power of two choices, load balancing,
equilibrium, concentration of measure, law of large numbers,
chaos.}

{{\it AMS 2000 subject classifications.} Primary 60C05; secondary
68R05, 90B22, 60K25, 60K30, 68M20.}

{Submitted 21 August 2005, accepted 10 January 2007.}

   \section{Introduction}
   We consider the following well-known scenario, often referred to
   as the `supermarket model'~\cite{lm04,ln03,m96a,m96b,m02}. Let $d$
   be a fixed integer at least 2. Let $n$
   be a positive integer and suppose that there are $n$
servers, each with a separate queue. Customers arrive in a Poisson
process at rate $\lambda n$, where $\lambda \in (0,1)$ is a
constant. Upon arrival each customer chooses $d$ servers uniformly
at random with replacement, and joins a shortest queue amongst
those chosen. If there is more than one chosen server with a
shortest queue, then the customer goes to the first such queue in
her list of $d$. Service times are independent unit mean
exponentials, and customers are served according to the first-come
first-served discipline.

   Recent work on the supermarket model
   includes~\cite{g99,g00,g04,l03,lm04,ln03,vdk96}. The
   survey~\cite{m02} gives several applications and related results: an
   important application is to load balancing when computational tasks
   are dynamically assigned to servers in a large array. It is shown
   in~\cite{g99,g00} that the system exhibits {\em
   propagation of chaos} given
a suitable initial state, and in particular if it is in
equilibrium. This means that the paths of members of any fixed
finite subset of queues are asymptotically independent of one
another, uniformly on bounded time intervals. This result implies
a law of large numbers for the time evolution of the proportion of
queues of different lengths, more precisely for the empirical
measure on path space~\cite{g99,g00}. In particular for each fixed
positive integer $k_0$, as $n$ tends to infinity the proportion of
queues with length at least $k_0$ at time $t$ converges weakly to
a function $v_t(k_0)$, where $v_t(0)=1$ for all $t \ge 0$ and
$(v_t(k): k \in \nats)$ is the unique solution to the system of
differential equations
\begin{equation}
\label{eqn.lln}
  {{dv_t(k)} \over {dt}} = \lambda (v_t(k-1)^d-v_t(k)^d) - (v_t(k) -
v_t(k+1))
\end{equation}
for $k \in \nats$ (see~\cite{vdk96}). Here one assumes appropriate
initial conditions $v_0=(v_0(k): k \in \nats)$ such that $1 \ge
v_0(1) \ge v_0(2) \geq \cdots \ge 0$, and $v_0 \in l_1$. Further,
again for a fixed positive integer $k_0$, as $n$ tends to
infinity, in the equilibrium distribution the proportion of queues
with length at least $k_0$ converges in probability to
$\lambda^{(d^{k_0}-1)/(d-1)}$, and thus the probability that a
given queue has length at least $k_0$ also converges to
$\lambda^{(d^{k_0}-1)/(d-1)}$.

Recent results in~\cite{lm04} include rapid mixing and two-point
concentration for the maximum queue length in equilibrium.

The main contribution of the present paper is to give quantitative
versions of the convergence results for the supermarket model
mentioned above, and to extend them to hold uniformly over all
times. We rely in part on combinatorial techniques developed
in~\cite{lm03,lm04}.

   For each time $t \geq 0$ and each $j=1,\ldots,n \ $ let $X^{(n)}_t(j)$
   denote the number of customers in queue $j$,
   always including the customer currently being served if there is
   one. We shall keep the superscript `$n$' in the notation in this
   section, but then usually drop it in later sections.
   We make the standard assumption of right-continuity of the sample
   paths. Let $X^{(n)}_t$ be the {\em queue-lengths vector}
   $(X^{(n)}_t(1),\ldots, X^{(n)}_t(n)) \in \ints_+^n$, where
   $\ints_+^n$ denotes the set of all $n$-vectors with components taking
   non-negative integer values.  Note that the
   $l_1$-norm $\none{X_t}$ of $X_t$ is the total number of customers
   present at time $t$, and the $l_{\infty}$-norm $\ninf{X_t}$ is the
   maximum queue length.

   For a given positive integer $n$, the $n$-queue process
   $(X^{(n)}_t)$ is an ergodic
   continuous-time Markov chain. Thus there is a unique
   stationary distribution ${\bf \Pi}^{(n)}$ for the
   vector $X^{(n)}_t$; and, whatever the distribution of the
   starting state, the distribution of the vector
   $X^{(n)}_t$ at time $t$ converges to ${\bf \Pi}^{(n)}$ as
   $t \rightarrow \infty$.
   As already noted in~\cite{g99,g00} (and easily verified),
   the distribution ${\bf \Pi}^{(n)}$ is {\it exchangeable}, that
   is invariant under permuting the co-ordinates.
   We shall usually write $Y^{(n)}_t$ to denote the queue-lengths vector
   in equilibrium: \m{:} we drop the subscript $t$ when no explicit reference to
   a particular time is needed.

   The probability law of a random element $X$ will
   be denoted by ${\cal L}(X)$. The {\em total variation distance}
   between two probability distributions $\mu_1$ and $\mu_2$ is
   defined by $\dtv (\mu_1,\mu_2)= \sup_A | \mu_1(A) - \mu_2(A)|$
   where the supremum is over all measurable sets $A$ in the underlying
   measurable space (see also the start of Section~\ref{sec.prelim}).
   Also, given a vector $v=(v(k): k=0,1,\ldots)$ such that
   \begin{equation} \label{decr}
   1 = v(0) \ge v(1) \ge v(2) \ge \ldots \ge 0 \; \mbox{ and } \;
   v(k) \to 0 \mbox{ as } k \to \infty,
   \end{equation}
   let ${\cal L}_v$ denote the law of a random variable $V$
   taking non-negative integer values, where
   $\pr(V \geq k) = v (k)$ for each $k=0,1,2,\ldots $. In fact,
   throughout this paper we shall work only with vectors $v \in l_1$.

   Finally, throughout we use the asymptotic notations
   $O(),\Omega ( ), o(),\omega()$ in a standard way,
   to describe the behaviour of functions depending on the number
   of servers $n$ as $n$ tends to infinity; for instance
   $f(n) = \Omega(g(n))$ means that, for some constants $c > 0$ and $n_0$,
   we have
   $f(n) \geq c g(n)$ for all $n \geq n_0$.

   We may now state four main results, two
   concerning approximating the distribution of a single typical
   queue length and two concerning collections of queues and chaos.
   These will be proved in the following sections, where we also
   present some further results.


   \subsection{Single queues}

   We first consider the $n$-queue system in equilibrium, and
   investigate how close the distribution of a typical queue length
   $Y^{(n)}(1)$ is to the limiting distribution.
   Let ${\cal L}_{\lambda,d}$ denote the law ${\cal L}_v$ where
   $v(k) = \lambda^{(d^k-1)/(d-1)}$ for each $k=0,1,\ldots$. Note
   that $\pr(Y^{(n)}(1) \geq 1) = \lambda = v(1)$. It is known (and
   was mentioned earlier) that ${\cal L}(Y^{(n)}(1))$ tends to
   ${\cal L}_{\lambda,d}$ as $n \to \infty$: we now quantify this
   convergence.

   \begin{theorem} \label{thm.distql}
   For each positive integer $n$ let $Y^{(n)}$ be a queue-lengths
   $n$-vector in equilibrium, and consider the length $Y^{(n)}(1)$ of
   queue~1. Then \[\dtv ({\cal L}(Y^{(n)}(1)), {\cal L}_{\lambda,d})\]
   is of order $n^{-1}$ up to logarithmic factors.
   \end{theorem}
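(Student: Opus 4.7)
The plan is to exploit a clean stationarity identity that reduces everything to controlling the variance of tail proportions. Writing $N_k^{(n)}$ for the number of queues of length at least $k$ in equilibrium and $p_k^{(n)} = \pr(Y^{(n)}(1) \geq k)$, equating the expected rate of increase of $N_k^{(n)}$ (arrivals whose $d$ chosen queues all have length $\geq k-1$ but not all $\geq k$, rate $\lambda n[(N_{k-1}^{(n)}/n)^d - (N_k^{(n)}/n)^d]$) with the expected rate of decrease (services at queues of length exactly $k$, rate $N_k^{(n)} - N_{k+1}^{(n)}$), then telescoping from $N_0^{(n)} \equiv n$, yields the key identity
\[
p_{k+1}^{(n)} \;=\; \lambda\, \E\bigl[(N_k^{(n)}/n)^d\bigr] \qquad (k \geq 0).
\]
The fluid fixed point satisfies $v(k+1) = \lambda\,v(k)^d$, so expanding $(N_k^{(n)}/n)^d$ around $p_k^{(n)}$ gives
\[
p_{k+1}^{(n)} - v(k+1) \;=\; \lambda\bigl((p_k^{(n)})^d - v(k)^d\bigr) + \lambda\,\tbinom{d}{2}(p_k^{(n)})^{d-2}\,\var(N_k^{(n)}/n) + O\bigl(\E|N_k^{(n)}/n - p_k^{(n)}|^3\bigr),
\]
exhibiting the finite-$n$ correction as driven by the variance of the empirical tail proportions.

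For the upper bound, the concentration methods of \cite{lm03,lm04} should give $\var(N_k^{(n)}/n) = O(\log^c n / n)$ and $\E|N_k^{(n)}/n - p_k^{(n)}|^3 = O(\log^{c'} n / n^{3/2})$ uniformly over the relevant range of $k$. Using $|(p_k^{(n)})^d - v(k)^d| \leq d\,|p_k^{(n)} - v(k)|$, the identity iterates to
\[
|p_{k+1}^{(n)} - v(k+1)| \;\leq\; \lambda d\,|p_k^{(n)} - v(k)| + O(\log^c n / n).
\]
Because $v(k)$ decays doubly-exponentially and \cite{lm04} gives a sharp $\log_d\log n + O(1)$ bound on the maximum queue length in equilibrium, only $O(\log\log n)$ values of $k$ contribute meaningfully, so the geometric factor $(\lambda d)^k$ is absorbed into a polylog. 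Summing $\dtv({\cal L}(Y^{(n)}(1)),{\cal L}_{\lambda,d}) \leq \sum_{k\geq 1}|p_k^{(n)} - v(k)|$ then yields the upper bound $O(\log^a n/n)$.

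For the matching lower bound, the case $k=1$ is decisive. Mean arrival rate equals mean departure rate in equilibrium, so $p_1^{(n)} = \lambda$ \emph{exactly}, and the identity collapses to
\[
p_2^{(n)} - v(2) \;=\; \lambda\,\bigl(\E[(N_1^{(n)}/n)^d] - \lambda^d\bigr) \;=\; \lambda\,\tbinom{d}{2}\lambda^{d-2}\var(N_1^{(n)}/n) + O\bigl(\E|N_1^{(n)}/n - \lambda|^3\bigr).
\]
Testing on the set $\{j \geq 2\}$ gives $\dtv \geq |p_2^{(n)} - v(2)|$, so once the cubic moment is shown to be $o(1/n)$ (immediate from concentration), it suffices to establish $\var(N_1^{(n)}/n) \geq c/n$ for some constant $c > 0$.

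That variance lower bound is the main obstacle. I expect to prove it by applying the generator to $N_1^2$: the equation $\E[\mathcal{L}N_1^2] = 0$ at stationarity reads
\[
2\lambda n\,\E[N_1^{(n)}(1 - (N_1^{(n)}/n)^d)] + \lambda n(1 - \E[(N_1^{(n)}/n)^d]) - 2\E[(N_1^{(n)})^2] + 2\E[N_1^{(n)} N_2^{(n)}] + n(\lambda - p_2^{(n)}) = 0,
\]
which, combined with the concentration bounds $N_k^{(n)}/n = v(k) + O(\log^c n/\sqrt{n})$ already used above, expresses $\var(N_1^{(n)})/n$ as $2\lambda(1-\lambda^d) - 2\lambda(1-\lambda^{d-1}) + o(1) = 2\lambda(\lambda^{d-1} - \lambda^d) + o(1) > 0$, delivering the required $\Omega(n)$ lower bound on $\var(N_1^{(n)})$. (An alternative route is a diffusion-limit central limit theorem in the spirit of Graham~\cite{g99,g00}, but the stationary-moment identity is self-contained given the concentration results of \cite{lm03,lm04} already invoked for the upper bound.)
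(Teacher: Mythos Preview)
Your upper bound and the reduction of the lower bound to $\var(N_1^{(n)}) = \Omega(n)$ are essentially the same as the paper's (the paper invokes the bound~(\ref{eqn.utight}) from~\cite{lm04} for the former, and Lemma~\ref{lem.distq2} carries out exactly your reduction for the latter).

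The gap is in your proposed proof that $\var(N_1^{(n)}) = \Omega(n)$. The stationary identity $\E[\mathcal{L}N_1^2]=0$ does not close: when you expand it, a term $2\E[N_1^{(n)}N_2^{(n)}]$ survives, and after subtracting off the mean-field part you are left with $\cov(N_1^{(n)},N_2^{(n)})/n$ on the right-hand side. The concentration bounds of~\cite{lm04} (Lemma~\ref{lem.concb} here) give only $\var(N_k^{(n)}) = O(n\ln^2 n)$, so Cauchy--Schwarz yields $|\cov(N_1^{(n)},N_2^{(n)})/n| \le O(\ln n)\sqrt{\var(N_1^{(n)})/n}$, which is \emph{not} $o(1)$. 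Your displayed formula $\var(N_1^{(n)})/n = 2\lambda(\lambda^{d-1}-\lambda^d)+o(1)$ cannot be obtained from $\E[\mathcal{L}N_1^2]=0$ alone; the covariance term has been dropped. With only Cauchy--Schwarz, the best one extracts from the identity is $\var(N_1^{(n)}) = \Omega(n/\ln^2 n)$, giving $\dtv = \Omega(n^{-1}\ln^{-2}n)$ rather than the paper's $\Omega(n^{-1})$. To close your route one would need either an independent bound on $\cov(N_1^{(n)},N_2^{(n)})$ or the full hierarchy $\E[\mathcal{L}(N_j N_k)]=0$, which essentially amounts to the CLT machinery of~\cite{g04} that you set aside.

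The paper avoids this entirely: it proves $\var(N_1^{(n)}) = \Omega(n)$ by a direct coupling argument. Conditioning on the Poisson number $Z$ of arrivals in $[0,1]$ and using the monotonicity of Lemma~\ref{lem.detcoupling}(c), it shows that the $\Theta(\sqrt{n})$ fluctuations of $Z$ propagate to $\Theta(\sqrt{n})$ fluctuations of $N_1^{(n)}$, by exhibiting $\Theta(n)$ ``shunned'' queues (empty at time~0, never touched by the basic customers or the departure process during $[0,1]$) that the extra arrivals can populate.
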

    In fact, we shall see that the above total variation distance is
   $o(n^{-1} \ln^3n)$ and is $\Omega(n^{-1})$.  Also, we shall deduce
   directly from Theorem~\ref{thm.distql}, together with a bound on the
   maximum queue length from~\cite{lm04} (given as~(\ref{eqn.ub3})
   below), the following:
   \begin{corollary} \label{cor.distql}
   For each positive integer $k$, the difference between the $k$th moment
   $\E[Y^{(n)}(1)^k]$ and the $k$th moment of ${\cal L}_{\lambda,d}$
   is of order $n^{-1}$ up to logarithmic factors.
   \end{corollary}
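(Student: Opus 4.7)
The plan is to combine Theorem~\ref{thm.distql} with tail estimates on both $Y := Y^{(n)}(1)$ and $V \sim {\cal L}_{\lambda,d}$. Fix $k \ge 1$ and a cutoff $M = M(n)$ to be chosen (we shall take $M = \lceil \ln n \rceil$). I would split
$$|\E[Y^k] - \E[V^k]| \;\le\; \sum_{j=1}^{M} j^k\, |\pr(Y=j) - \pr(V=j)| \,+\, \E[Y^k \ind_{Y > M}] \,+\, \E[V^k \ind_{V > M}].$$
The first (``bulk'') term is at most $M^k \sum_j |\pr(Y=j) - \pr(V=j)| = 2 M^k \dtv({\cal L}(Y), {\cal L}_{\lambda,d})$, which by Theorem~\ref{thm.distql} is $o(n^{-1} M^k \ln^3 n) = o(n^{-1} \ln^{k+3} n)$.

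For each tail term, Abel summation shows that for any non-negative integer random variable $W$,
$$\E[W^k \ind_{W > M}] \;\le\; (M+1)^k \pr(W > M) \,+\, k \sum_{j > M} j^{k-1} \pr(W \ge j).$$
Applied to $V$: since $\pr(V \ge j) = \lambda^{(d^j-1)/(d-1)}$ decays doubly exponentially, $\E[V^k \ind_{V > M}]$ is $o(n^{-c})$ for every $c > 0$ as soon as $M \gg \log_d \log n$. Applied to $Y$: by exchangeability, $\pr(Y \ge j) \le \pr(\ninf{Y^{(n)}} \ge j)$, and by the bound (\ref{eqn.ub3}) from \cite{lm04} on the maximum queue length in equilibrium, this probability decays very rapidly once $j$ exceeds the typical scale (of order $\log_d \log n$). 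Hence with $M = \lceil \ln n \rceil$ the $Y$-tail is $o(n^{-1})$, and similarly for $V$.

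Combining the three bounds yields $|\E[Y^k] - \E[V^k]| = O(n^{-1} \ln^{k+3} n)$, which is of order $n^{-1}$ up to logarithmic factors, as required. The one delicate point is checking that (\ref{eqn.ub3}) decays quickly enough at $j \sim \ln n$ to dominate the polynomial $j^{k-1}$ factor in the Abel bound; provided this maximum-queue-length estimate gives at least exponential decay above the typical scale $\log_d \log n$, the argument goes through comfortably, and the precise form of the logarithmic factor is determined by the $\ln^3 n$ in Theorem~\ref{thm.distql} together with the single $M^k = \ln^k n$ from truncation.
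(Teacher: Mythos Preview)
Your approach is the same as the paper's: truncate, bound the bulk by $M^k\cdot\dtv$, and bound the two tails separately. There is, however, one genuine gap and one difference in execution.

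The gap is in the $Y$-tail. The bound~(\ref{eqn.ub3}) is stated only for $r = O(\ln n)$, so by itself it does not make the infinite Abel sum $\sum_{j>M} j^{k-1}\pr(Y\ge j)$ finite: beyond $j$ of order $\ln n$ you get no decay in $j$ from~(\ref{eqn.ub3}) (monotonicity only gives a constant upper bound $e^{-\Omega(\ln^2 n)}$ for all large $j$), and $\sum_j j^{k-1}\cdot\text{const}$ diverges. You need a tail bound valid for \emph{all} $j$; the paper uses Lemma~\ref{lem.march04}(b), namely $\pr(M^{(n)}\ge j)\le n\lambda^j$, to handle the far tail.

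The difference in execution is the choice of cutoff. The paper takes $m_1=\lfloor 2\ln\ln n/\ln d\rfloor$ rather than $\ln n$, and then controls $\E[Y^k\,\ind_{Y\ge m_1}]$ via a second cutoff $m_2=\lceil 4\ln n/\ln(1/\lambda)\rceil$, writing
\[
\E[Y^k\,\ind_{Y\ge m_1}] \;\le\; m_2^k\,\pr(Y\ge m_1)\;+\;\E[Y^k\,\ind_{Y\ge m_2}],
\]
and applying~(\ref{eqn.ub3}) to the first term and Lemma~\ref{lem.march04}(b) to the second. This two-level truncation both closes your gap and, because the bulk factor is now $m_1^k=(O(\ln\ln n))^k$ rather than $(\ln n)^k$, yields the sharper estimate $e^{-\ln n+O(\ln\ln n)}$ in place of your $O(n^{-1}\ln^{k+3}n)$. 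Either bound suffices for the corollary as stated.
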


   Now we drop the assumption that the system is in equilibrium,
   and consider its behaviour from the beginning (that is,
   from time~0).  We state one theorem here,
   Theorem~\ref{cor.distq3}.
   More general though less digestible results (Theorems~\ref{thm.distq2}
   and~\ref{cor.distq2}) are stated and proved in Section~\ref{sec.distql},
   and Theorem~\ref{cor.distq3} will follow easily from
   Theorem~\ref{cor.distq2}.
   We assume in Theorem~\ref{cor.distq3} that the initial
   queue lengths are iid and not too large; and see that the law of
   a typical queue length is close to ${\cal L}_{v_t}$, where
   $v_t=(v_t(k): k \in \nats)$ is the solution to the
   system~(\ref{eqn.lln}) subject to the natural initial conditions.

   Given a queue-lengths vector $x$,
   that is a vector with non-negative integer co-ordinates,
   we let $u(k,x)$ be the proportion
   of queues with length at least~$k$.

   \begin{theorem} \label{cor.distq3}
   There is a constant $\varepsilon >0$
   such that the following holds.
   Let the random variable $X$ take non-negative values,
   and suppose that %
   $\E[e^{X/\varepsilon}]$ is finite.
   For each positive integer $n$, let $(X^{(n)}_t)$ be an $n$-queue
   process where the random initial queue lengths
   $X_0^{(n)}(1),\ldots,X_0^{(n)}(n)$ are iid
and stochastically at most $X$.
   Let $v^{(n)}_t = (v^{(n)}_t(k): k=0,1,2,\ldots)$ be the unique
   solution to the system~(\ref{eqn.lln}) subject to the initial
   conditions $v^{(n)}_0(k)= \E[u(k,X^{(n)}_0)]$ for $k=0,1,\ldots$.

   Then the law of $X^{(n)}_t$ is exchangeable for each $t \geq 0$, and
\[\sup_{t \ge 0} \dtv ({\cal L}(X^{(n)}_t(1)), {\cal L}_{v^{(n)}_t})
   = O(n^{-\varepsilon}).\]
   \end{theorem}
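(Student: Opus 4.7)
The plan is to reduce the theorem to two ingredients: the exchangeability of $X^{(n)}_t$, which lets us identify tail probabilities of a typical coordinate with expected empirical tail proportions, and the quantitative law-of-large-numbers type result Theorem~\ref{cor.distq2}, which controls how close these empirical proportions are to $v^{(n)}_t$.

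First I would verify exchangeability. Since $X^{(n)}_0(1),\ldots,X^{(n)}_0(n)$ are iid, their joint law is exchangeable. The dynamics are symmetric under permutation of queue indices: arrivals pick an ordered $d$-tuple of servers uniformly at random and join a shortest queue with the tie broken by position in the list, and service times are iid across queues. Thus if $\pi$ is any permutation of $\{1,\ldots,n\}$ then the process obtained by permuting the initial state and relabelling servers consistently for arrivals has the same law as $X^{(n)}$. Hence $\pr(X^{(n)}_t(1)\ge k)=\E[u(k,X^{(n)}_t)]$ for each $k$ and $t$, and therefore
\[
\dtv({\cal L}(X^{(n)}_t(1)),{\cal L}_{v^{(n)}_t})\;\le\;\sum_{k\ge 1}\bigl|\E[u(k,X^{(n)}_t)]-v^{(n)}_t(k)\bigr|.
\]

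Next I would split the sum at a threshold $k_0$ of order $\log n$. For $k\le k_0$ I would invoke Theorem~\ref{cor.distq2}, which should supply a uniform-in-$t$ bound of the form $\E|u(k,X^{(n)}_t)-v^{(n)}_t(k)|=O(n^{-\varepsilon'})$, so that the head of the sum contributes at most $k_0\cdot O(n^{-\varepsilon'})=O(n^{-\varepsilon'}\log n)$, absorbed into $O(n^{-\varepsilon})$ on choosing $\varepsilon<\varepsilon'$. For $k>k_0$ I would control both terms by tail bounds. The assumption $\E[e^{X/\varepsilon}]<\infty$ gives geometric decay of $v^{(n)}_0(k)$; since the right-hand side of~(\ref{eqn.lln}) contains $v_t(k-1)^d$, the solution in fact preserves super-geometric decay of $v^{(n)}_t(k)$ in $k$, so $\sum_{k>k_0}v^{(n)}_t(k)=o(n^{-1})$ uniformly in $t$ for a suitable $k_0$. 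For $\E[u(k,X^{(n)}_t)]\le\pr(\ninf{X^{(n)}_t}\ge k)$ I would apply a uniform-in-$t$ bound on the maximum queue length, obtained by comparison with equilibrium past a polylogarithmic mixing time (using~(\ref{eqn.ub3})) and a direct stochastic domination argument on $[0,T(n)]$ that uses the finite exponential moment of $X$ to control the initial state.

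The main obstacle sits inside Theorem~\ref{cor.distq2}: securing concentration of $u(k,X^{(n)}_t)$ around $v^{(n)}_t(k)$ simultaneously for all $t\ge 0$, not just for each fixed $t$, relying on the pathwise techniques of~\cite{lm03,lm04}. The deduction of Theorem~\ref{cor.distq3} from it is comparatively clean, but the exponential moment hypothesis has to be threaded carefully through the tail estimate on $\ninf{X^{(n)}_t}$, and the constant $\varepsilon$ must be chosen small enough to match both the concentration rate provided by Theorem~\ref{cor.distq2} and the tail rate implied by the finiteness of $\E[e^{X/\varepsilon}]$.
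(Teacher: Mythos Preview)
Your deduction is more elaborate than necessary because you have misread what Theorem~\ref{cor.distq2} actually asserts.  It does not give a bound of the form $|\E[u(k,X^{(n)}_t)] - v^{(n)}_t(k)| = O(n^{-\varepsilon'})$ for each $k$; it gives precisely the conclusion of Theorem~\ref{cor.distq3}, namely $\sup_{t\ge 0}\dtv({\cal L}(X^{(n)}_t(1)),{\cal L}_{v^{(n)}_t}) = O(n^{-\varepsilon})$, under hypotheses on an initial vector $x$ plus independent integer perturbations $Z_1,\ldots,Z_n$.  The paper's proof of Theorem~\ref{cor.distq3} is therefore a single observation: take $x=\mathbf{0}$ in Theorem~\ref{cor.distq2} and let the perturbations $Z_j$ be the iid initial queue lengths $X^{(n)}_0(j)$ themselves.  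Since the $Z_j$ are iid the final permutation step in Theorem~\ref{cor.distq2} is superfluous.  Stochastic domination by $X$ with $\E[e^{X/\varepsilon}]<\infty$ yields uniform bounds $\E[e^{Z_j/\varepsilon}]\le\E[e^{X/\varepsilon}]=:\beta_1$ and $\mathrm{Var}(Z_j)\le\E[Z_j^2]\le\E[X^2]=:\beta_2<\infty$, so the hypotheses of Theorem~\ref{cor.distq2} hold with $c_0=0$ and $\beta=\max(\beta_1,\beta_2)$.

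The steps you outline---bounding $\dtv$ by $\sum_k |u_t(k)-v^{(n)}_t(k)|$, truncating at a level $k_0$ of logarithmic size, and handling the tail via bounds on $\ninf{X^{(n)}_t}$ and on the decay of $v^{(n)}_t(k)$---are not wrong, but they are already carried out inside the proof of Theorem~\ref{thm.distq2} (see the argument around~(\ref{eqn.distrib1}) and the coupling comparison with equilibrium that follows it).  Theorem~\ref{cor.distq2} then just verifies that the error terms $\delta_n$, $\gamma_n$, $s_n$ of Theorem~\ref{thm.distq2} are small for the ``vector plus independent perturbations'' initial data.  So your plan would end up redoing the conversion from $\ninf{u_t - v_t}$ to the total variation distance, work that has already been packaged.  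Your instinct that the real content lies in Theorem~\ref{cor.distq2} (and behind it Theorem~\ref{thm.distq2}) is correct; the deduction of Theorem~\ref{cor.distq3} from it is immediate once you read its statement correctly.
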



   \subsection{Collections of queues}

   The above results concern the distribution of a single queue
   length. We now consider collections of queues and propagation of
   chaos. The term ``propagation of chaos'' comes from statistical
   physics~\cite{kac}, and the original motivation was the evolution of
   particles in physical systems. The subject has since then received
   considerable attention culminating in the work of Sznitman~\cite{s91}.

Our results below establish chaoticity for the
   supermarket model.
   As before, we first discuss the equilibrium distribution.
   We see that for fixed $r$ the total variation distance between
   the joint law of $r$ queue lengths and the product law is at most
   $O(n^{-1})$, up to logarithmic factors.  More precisely and more generally
   we have:
   \begin{theorem} \label{thm.chaos1}
   For each positive integer $n$, let $Y^{(n)}$ be a queue-lengths
   $n$-vector in equilibrium. Then, uniformly over all positive
   integers $r \leq n$, the total variation distance between the
   joint law of $Y^{(n)}(1), \ldots, Y^{(n)}(r)$ and the product law
   ${\cal L}(Y^{(n)}(1))^{\otimes r}$
   is at most
   $O(n^{-1} \ln^2 n (2 \ln \ln n)^r)$;
   and the total variation distance between the
   joint law of $Y^{(n)}(1), \ldots, Y^{(n)}(r)$ and
   the limiting product law ${\cal L}_{\lambda,d}^{\otimes r}$
   is at most
   $O(n^{-1} \ln^2 n (2 \ln \ln n)^{r+1})$.
   \end{theorem}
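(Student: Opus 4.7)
The first bound is the heart of the result; the second will follow from it by a triangle inequality with Theorem~\ref{thm.distql}. The plan hinges on exchangeability. Let $\mu_n := \frac{1}{n}\sum_{i=1}^{n}\delta_{Y^{(n)}(i)}$ be the (random) empirical measure of queue lengths in equilibrium, and set $\bar\mu := \E\mu_n$; exchangeability gives $\bar\mu = {\cal L}(Y^{(n)}(1))$, and conditional on $\mu_n$ the tuple $(Y^{(n)}(1),\ldots,Y^{(n)}(r))$ is a uniform sample of size $r$ drawn \emph{without replacement} from the multiset $\{Y^{(n)}(1),\ldots,Y^{(n)}(n)\}$.

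Three ingredients are then combined. First, the standard coupling of $r$ distinct uniform indices from $[n]$ with $r$ i.i.d.\ uniform indices shows
\[
\dtv\bigl({\cal L}(Y^{(n)}(1),\ldots,Y^{(n)}(r)),\; \E[\mu_n^{\otimes r}]\bigr) \;\le\; \binom{r}{2}/n.
\]
Second, the telescoping identity $\prod_j a_j - \prod_j b_j = \sum_k (a_k-b_k)\prod_{j<k}a_j\prod_{j>k}b_j$, applied with $a_j = \mu_n(\{i_j\}),\ b_j = \bar\mu(\{i_j\})$ and summed over all $(i_1,\ldots,i_r)$, yields
\[
\dtv\bigl(\E[\mu_n^{\otimes r}],\; \bar\mu^{\otimes r}\bigr) \;\le\; r\, \E[\dtv(\mu_n,\bar\mu)].
\]
Third and most delicate, we bound $\E[\dtv(\mu_n,\bar\mu)] = \tfrac{1}{2}\E\sum_k|\mu_n(\{k\})-\bar\mu(\{k\})|$. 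By the bound~(\ref{eqn.ub3}) on the maximum queue length in equilibrium, with probability $1 - n^{-\omega(1)}$ every queue has length at most $M = O(\ln\ln n)$, truncating the sum to $k \le M$ at cost $o(n^{-1})$. For each such $k$, writing $\mu_n(\{k\}) = u(k,Y^{(n)}) - u(k+1,Y^{(n)})$ and applying the concentration machinery of~\cite{lm03,lm04}---the same apparatus underlying Theorem~\ref{thm.distql}---one obtains $\E|u(k,Y^{(n)}) - \E u(k,Y^{(n)})| = O(n^{-1}\ln^2 n)$; summing over the $O(\ln\ln n)$ relevant values of $k$ gives $\E[\dtv(\mu_n,\bar\mu)] = O(n^{-1}\ln^2 n \cdot \ln\ln n)$.

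Combining the three estimates and using $r\ln\ln n \le (2\ln\ln n)^r$ (valid for $r \ge 1$ once $\ln\ln n \ge 1$) yields the first bound. For the second, the triangle inequality adds $r\,\dtv(\bar\mu,{\cal L}_{\lambda,d})$, which by Theorem~\ref{thm.distql} is at most $O(r n^{-1}\ln^3 n)$ and is absorbed by the extra factor $2\ln\ln n$ in the stated bound. The main obstacle is the third estimate: a direct variance computation gives only $\sqrt{\var(\mu_n(\{k\}))} = O(n^{-1/2})$ per $k$, which is hopelessly too weak. Attaining the much sharper $O(n^{-1}\ln^2 n)$ rate requires exploiting the specific mean-field and rapid-mixing structure of the $n$-queue equilibrium, via the combinatorial and martingale techniques of~\cite{lm03,lm04}; this is essentially the same analytic heart that drives Theorem~\ref{thm.distql}.
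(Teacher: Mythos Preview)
There is a genuine gap in your third ingredient. You assert that the concentration machinery of \cite{lm03,lm04} gives
\[
\E\bigl|u(k,Y^{(n)}) - \E[u(k,Y^{(n)})]\bigr| = O(n^{-1}\ln^2 n),
\]
and you describe this as ``the same analytic heart that drives Theorem~\ref{thm.distql}''. But these are two different quantities. What underlies Theorem~\ref{thm.distql} is the \emph{deterministic} estimate~(\ref{eqn.utight}), namely $\sup_k|u(k)-\lambda^{(d^k-1)/(d-1)}|=O(n^{-1}\ln^2 n)$, comparing the mean $u(k)=\E[u(k,Y^{(n)})]$ to its limit. The \emph{random} fluctuation $u(k,Y^{(n)})-u(k)$ is genuinely of order $n^{-1/2}$: Lemma~\ref{lem.conca} gives concentration only at scale $n^{-1/2}$, and in fact the proof of Lemma~\ref{lem.distq2} establishes $\var(\ell(1,Y^{(n)}))=\Omega(n)$, so $\E|u(1,Y^{(n)})-u(1)|=\Omega(n^{-1/2})$. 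Thus $\E[\dtv(\mu_n,\bar\mu)]$ is $\Theta(n^{-1/2})$ up to logarithms, and your telescoping bound yields only $O(rn^{-1/2}\ln n\,\ln\ln n)$, far short of the target.

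The point your decomposition misses is a cancellation. The telescoping identity is a \emph{first-order} expansion, and after taking absolute values you are stuck with $r$ copies of the $O(n^{-1/2})$ fluctuation. The paper instead expands $\E\bigl[\prod_s\bar\phi_s(Y)\bigr]-\prod_s\hat\phi_s$ multinomially as $\sum_{A:|A|\ge 2}\E\bigl[\prod_{s\in A}(\bar\phi_s(Y)-\hat\phi_s)\bigr]\prod_{s\notin A}\hat\phi_s$: the $|A|=1$ terms vanish by centering, and each surviving term with $|A|=a\ge 2$ is bounded (via Lemma~\ref{lem.conca}) by $(O(n^{-1/2}\ln n))^a$. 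The dominant $a=2$ contribution is therefore $O(n^{-1}\ln^2 n)$, which after the conversion from empirical averages to coordinates and a pointwise-to-TV argument over $O((\ln\ln n)^r)$ atoms gives the stated bound. Your steps 1 and 2, and the triangle-inequality reduction of the second statement to the first, are fine; it is step 3 that cannot be repaired without replacing the linear telescoping by this second-moment expansion.
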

   Note that since the distribution of $Y^{(n)}$ is exchangeable, for
   any distinct indices $j_1,\ldots,j_r$ the joint distribution of
   $Y^{(n)}(j_1), \ldots, Y^{(n)}(j_r)$ is the same as that of
   $Y^{(n)}(1), \ldots, Y^{(n)}(r)$.
   Also, if $d$ were 1 we would have `exact' independence of all
   queues in equilibrium. Further, note that the above result
   can yield a bound less than 1 only if $r < \ln n / \ln\ln\ln n$;
   but as long as $r =o( \ln n / \ln\ln\ln n)$ the bound is
   $O(n^{-1+o(1)})$.
   We shall mention in Section~\ref{sec.chaos} how this result relates to
   Sznitman's treatment of chaos in~\cite{s91}
     -- see in particular the inequalities~(\ref{eq.dtv})
     and~(\ref{eq.dtv-1}) below.\bigskip

   \smallskip


   Now we drop the assumption that the system is in equilibrium,
   and consider its behaviour from time~0.  We state one theorem here,
   Theorem~\ref{cor.chaos2}.
   More general though less digestible results (Theorems~\ref{thm.chaos2}
   and~\ref{cor.chaos1}) are stated and proved in Section~\ref{sec.chaos},
   and Theorem~\ref{cor.chaos2} will follow easily from
   Theorem~\ref{cor.chaos1}.
   As with Theorem~\ref{cor.distq3} earlier, we assume in
   Theorem~\ref{cor.chaos2} that the initial
   queue lengths are iid and not too large:
   now we see that the joint law of
   $r$ queue lengths is close to the product law, that is we have
   chaotic behaviour, uniformly for all times.

   Given an $n$-queue process $(X_t^{(n)})$ and a positive integer $r \leq n$,
   let ${\cal L}_t^{(n,r)}$ denote the joint law of
   $X_t^{(n)}(1),\ldots, X_t^{(n)}(r)$, and let $\tilde{\cal L}_t^{(n,r)}$
   denote the product law ${\cal L}(X_t^{(n)}(1))^{\otimes r}$
   of $r$ independent copies of $X_t^{(n)}(1)$.
   \begin{theorem} \label{cor.chaos2}
   Let the random variable $X$ take
   non-negative values, and suppose that the moment
   generating function $\E[e^{s_0X}]$ is finite for some $s_0>0$.
   For each  integer $n \geq r$, let $(X^{(n)}_t)$ be an $n$-queue
   process where the random initial queue lengths $X_0^{(n)}(1),\ldots,X_0^{(n)}(n)$
   are iid and stochastically at most $X$.
   Then the law of $X^{(n)}_t$ is exchangeable for each $t \geq 0$,
   and uniformly over positive integers $r \leq n$
   \[ \sup_{t \geq 0} \ \dtv \left( {\cal L}_t^{(n,r)}, \tilde{\cal L}_t^{(n,r)}
   \right) = O \left( n^{-1} (\ln n)^{r+2} \right).\]
   \end{theorem}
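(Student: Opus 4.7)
The plan is to deduce Theorem~\ref{cor.chaos2} from the more general Theorem~\ref{cor.chaos1} (to be established in Section~\ref{sec.chaos}), by using the moment-generating-function hypothesis on $X$ to install a uniform-in-time tail estimate that feeds cleanly into that result. Exchangeability of ${\cal L}(X_t^{(n)})$ for every $t \ge 0$ is automatic: the law at time~$0$ is iid, hence exchangeable, and because each arriving customer picks $d$ servers uniformly at random with replacement (with tie-breaking according to list position among the chosen $d$, which does not depend on server labels), the joint distribution of the dynamics is invariant under relabelling of servers. Exchangeability is therefore preserved at all $t \ge 0$.

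The main preparatory step is a uniform-in-time exponential tail bound: there exist constants $c>0$ and $C<\infty$, depending only on $\lambda$, $d$, $s_0$ and $\E[e^{s_0 X}]$, such that
\[
\pr\bigl(X_t^{(n)}(1) \ge k\bigr) \;\le\; C e^{-ck} \quad \text{for all } n,\ t \ge 0 \text{ and } k \ge 0.
\]
At $t=0$ this is the MGF hypothesis. For $t>0$, stochastic monotonicity of the supermarket model (cf.~\cite{g99,lm04}) lets one couple $(X_t^{(n)})$ with a reference process of independent $M/M/1$ queues of traffic intensity $\lambda<1$, whose equilibrium has a geometric tail. Combining the initial exponential tail (propagated forward) with the reference equilibrium tail yields the displayed bound, with constants independent of $n$ and $t$.

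Armed with this tail estimate, I would truncate at level $k_0 = \lceil K \ln n \rceil$ for a sufficiently large constant $K$. A union bound over the $r$ selected queues in both the joint law ${\cal L}_t^{(n,r)}$ and the product law $\tilde{\cal L}_t^{(n,r)}$ shows that the event ``some queue exceeds $k_0$'' contributes at most $o(n^{-1})$ to the total variation distance, which is negligible. On the truncated side, Theorem~\ref{cor.chaos1} is expected to deliver a bound of the form $O(n^{-1} \ln^2 n \cdot k_0^{r})$: the factor $\ln^2 n$ matches the ambient concentration term already visible in Theorem~\ref{thm.chaos1}, and the factor $k_0^{r}$ reflects enumeration over joint queue-length profiles up to level $k_0$ for the $r$ selected queues. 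Substituting $k_0 = O(\ln n)$ produces the claimed $O(n^{-1}(\ln n)^{r+2})$, uniformly in $t$ and in $r \le n$.

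The main obstacle is really the uniform-in-time tail bound: one must verify that the constants $c, C$ in the tail estimate are independent of $n$ and of $r$, and that the monotone/majorisation coupling with the $M/M/1$ reference system (or an analogous device from~\cite{lm04}) does in fact propagate an initial MGF bound to all later times without inflating constants. Once this is in place, the deduction from Theorem~\ref{cor.chaos1} is essentially bookkeeping: a truncation plus a union bound to handle the tails, and an insertion of $k_0 = O(\ln n)$ into the quantitative bound provided by that more general statement.
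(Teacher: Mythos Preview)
Your high-level plan --- deduce Theorem~\ref{cor.chaos2} from Theorem~\ref{cor.chaos1} --- matches the paper, but the paper's deduction is a single line: take $x = {\bf 0}$ in Theorem~\ref{cor.chaos1}, so that $m = \ninf{x} = 0$, and read off the bound $c\, n^{-1}(2\ln n)^{r+2} = O(n^{-1}(\ln n)^{r+2})$. The iid initial queue lengths $X_0^{(n)}(j)$ serve directly as the perturbations $Z_j$: stochastic domination by $X$ together with the MGF hypothesis gives $\E[e^{s_0 Z_j}] \le \E[e^{s_0 X}]$ and $\var(Z_j) \le \E[X^2] < \infty$ uniformly in $j$ and $n$, so the hypotheses of Theorem~\ref{cor.chaos1} hold with a suitable $\alpha$. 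Since the $Z_j$ are already iid, the final permuting step is redundant.

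Your ``main preparatory step'' (a uniform-in-time exponential tail bound on $X_t^{(n)}(1)$) and the truncation at level $k_0$ are unnecessary detours. Theorem~\ref{cor.chaos1} requires only conditions on the \emph{initial} law, and its conclusion $c\,n^{-1}(m + 2\ln n)^{r+2}$ involves no truncation level; your guess that it delivers $O(n^{-1}\ln^2 n \cdot k_0^{\,r})$ does not match its actual statement. Moreover, the proposed $M/M/1$ majorisation is not valid as stated: a single queue in the supermarket model can receive arrivals at instantaneous rate up to roughly $\lambda d$ (when it is the shortest among the $d$ sampled), so it is not pathwise dominated by an $M/M/1(\lambda)$ queue, and for $\lambda d \ge 1$ no stable $M/M/1$ upper bound of this kind is available. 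The paper controls queue-length tails inside the proof of Theorem~\ref{thm.chaos2} via the monotone coupling of Lemma~\ref{lem.detcoupling} together with the equilibrium maximum bound~(\ref{eqn.ub3}); none of that work needs to be redone in the passage from Theorem~\ref{cor.chaos1} to Theorem~\ref{cor.chaos2}.
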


   We remark also that an $O(n^{-1})$ upper bound on the total variation
   distance between the law of a finite $r$-tuple of queues and the
   product law is known on fixed time
   intervals for iid
   initial queue lengths~-~see~\cite{g00},
   Theorem 3.5. However, the bound in~\cite{g00} grows exponentially
   in time, and does not extend to the equilibrium case. We are not
   aware of any earlier equilibrium bounds or time-uniform bounds
   like those given in Theorems~\ref{thm.chaos1},~\ref{cor.chaos2},
   ~\ref{thm.chaos2} and~\ref{cor.chaos1}.

   Finally, let us mention at this point that connections between
   rapid mixing, concentration of measure and chaoticity have been
   observed earlier in various other contexts. The reader is referred
   to~\cite{gz02,l01,s91,v05} and references therein for more information.


   \section{Preliminaries}
   \label{sec.prelim}

   This section contains some definitions and some results from~\cite{lm04}
   needed in our proofs.
   We start with two lemmas which show that the
   supermarket model is rapidly mixing. Lemma~\ref{thm.stat} upper
   bounds the total variation distance and Lemma~\ref{thm.statW}
   upper bounds the Wasserstein distance between ${\cal L}(X_t)$ and
   the equilibrium distribution ${\bf \Pi}$.

   We say that a real-valued function $f$ defined on a subset $A$ of $\reals^n$ is
   1-Lipschitz if
   $|f(x)-f(y)| \leq \none{x-y}$ for all $x,y \in A$.
   Let us recall two equivalent definitions of the total variation distance
   $\dtv (\mu_1,\mu_2)$ between two probability distributions $\mu_1$ and $\mu_2$
   on $\ints_+^n$,
   and two corresponding definitions of the Wasserstein distance
   $\dw(\mu_1,\mu_2)$.
   We have $\dtv (\mu_1,\mu_2)= \inf \pr(X \neq Y)$, and
   $\dw(\mu_1,\mu_2) = \inf \E[ \none{X - Y}]$, where
   in each case the infimum is over all couplings of $X$ and $Y$ where
   ${\cal L}(X)= \mu_1$ and ${\cal L}(Y)=\mu_2$.
   Also,
   \[\dtv (\mu_1,\mu_2)=
   \frac12 \sup_{\ninf{\phi} \leq 1} |\int \phi \ d\mu_1 - \int \phi
   \ d\mu_2|\]
   where the supremum is over measurable functions $\phi$ with
   $\ninf{\phi} \leq 1$; and
   \[\dw(\mu_1,\mu_2)=
   \sup_{f \in {\cal F}_1} |\int f \ d\mu_1 - \int f \
   d\mu_2|\]
   where the supremum is over the set ${\cal F}_1$ of 1-Lipschitz functions
   $f$ on $\ints_+^n$.
   The total variation distance between the corresponding
   laws on $\ints_+^n$
   is always at most the Wasserstein distance: to see this, note
   for example that \mbox{$\ind_{X \neq Y} \le \none{X-Y}$}.

   Recall that $0<\lambda<1$ and $d \in \nats$ are fixed.
   \begin{lemma} \label{thm.stat}
   For each constant $c>0$ there exists a constant $\eta >0$ such that
   the following holds for each positive integer $n$.
   For each time $t \geq 0$ let
   \[ \delta_{n,t} = \pr(\none{X^{(n)}_0} > c n) + \pr(\ninf{X^{(n)}_0}
   > \eta t).\]
   Then
   \[ \dtv ({\cal L}(X^{(n)}_t),{\bf \Pi}) \leq
   n e^{-\eta t} + 2 e^{-\eta n} + \delta_{n,t}.\]
   \end{lemma}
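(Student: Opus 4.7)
The natural strategy is a coupling argument: construct a joint process $(X_t, Y_t)$ where $X_t$ has the given law $\mathcal{L}(X_t^{(n)})$ and $Y_t$ has law $\mathbf{\Pi}$, and then bound $\pr(X_t \neq Y_t)$, using $\dtv(\mathcal{L}(X_t),\mathbf{\Pi}) \leq \pr(X_t \neq Y_t)$. I would decompose this probability into contributions from (a) the initial state of $X_0^{(n)}$ being atypical, (b) the equilibrium state $Y_0$ being atypical, and (c) the two processes failing to merge by time $t$ even given good initial data. These three contributions will match, respectively, the terms $\delta_{n,t}$, $2e^{-\eta n}$, and $ne^{-\eta t}$.

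For (a), I would simply condition on the event $\{\none{X_0^{(n)}} \le cn\} \cap \{\ninf{X_0^{(n)}} \le \eta t\}$, whose complement has probability at most $\delta_{n,t}$. For (b), I would use the known tail bounds on the equilibrium distribution from~\cite{lm04}: namely, that in equilibrium $\none{Y_0}$ is tightly concentrated near its mean (which is $\Theta(n)$), and $\ninf{Y_0}$ has sub-Gaussian/sub-exponential tails, so both $\pr(\none{Y_0} > cn)$ and $\pr(\ninf{Y_0} > \eta t)$ can be made at most $e^{-\eta n}$ for a suitably small constant $\eta>0$ (depending on $c$). Together this handles the $2e^{-\eta n}$ term.

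The heart of the proof is (c): show that, conditional on both initial states being bounded in $l_1$ and $l_\infty$ as above, there is a coupling under which $\pr(X_t \neq Y_t \mid \text{good start}) \leq n e^{-\eta t}$. The standard device is the monotone matching coupling: synchronise the arrival Poisson process of rate $\lambda n$ and the $d$ random choices for each arrival across the two copies, and synchronise the unit-rate service clocks at each server. Under this coupling, the set of ``discrepant'' queues can only shrink or be refreshed by new arrivals; the key quantitative input from~\cite{lm04} is that, thanks to $d \geq 2$, each individual queue is exponentially contractive and any discrepancy at a given server is cleared by time $t$ except with probability $e^{-\eta t}$, whereupon a union bound over the $n$ coordinates yields the $ne^{-\eta t}$ term. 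The $l_\infty$ bound on the initial states is needed to guarantee that the per-queue waiting time for the last ``relevant'' service event is indeed of order $t$, and the $l_1$ bound ensures the total work in the system does not inflate the coupling time.

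The main obstacle is constructing and analysing this coupling carefully enough to extract the per-coordinate exponential decay. Naive couplings lose a factor of $n$ or fail to decouple the coordinates, so one really needs the detailed concentration and mixing machinery from~\cite{lm03,lm04}; in fact, since the lemma is quoted from~\cite{lm04}, the cleanest route in the present paper is to invoke those results directly rather than reprove them.
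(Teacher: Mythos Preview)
Your closing remark is exactly right, and it is in fact all the paper does: Lemma~\ref{thm.stat} is stated in the Preliminaries section as one of ``some results from~\cite{lm04} needed in our proofs'', with no proof given here. So there is nothing in this paper to compare your sketch against; the paper simply cites \cite{lm04}.

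As for the sketch itself, the overall coupling architecture (synchronised arrivals, choices, and potential departures, as in Lemma~\ref{lem.detcoupling}) is indeed what underlies the result in~\cite{lm04}. One point to tidy if you ever write it out: you say that $\pr(\ninf{Y_0} > \eta t)$ can be made at most $e^{-\eta n}$, but this is false for small $t$ (the probability is close to~$1$ when $t$ is small). This is harmless for the final bound, since for such $t$ the term $n e^{-\eta t}$ already exceeds~$1$ and the inequality is vacuous; but the decomposition of the $2e^{-\eta n}$ term in~\cite{lm04} is not quite as you describe. Also note that Lemma~\ref{lem.march04}(a) gives $\pr(\none{Y_0}>cn)\le e^{-\eta n}$ only for $c>\lambda/(1-\lambda)$, whereas Lemma~\ref{thm.stat} allows any $c>0$; the proof in~\cite{lm04} handles this by using a larger internal constant for the stationary copy.
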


   \begin{lemma} \label{thm.statW}
   For each constant $c>\frac{\lambda}{1-\lambda}$ there exists a
   constant $\eta >0$ such that the following holds for each positive
   integer $n$. Let $M^{(n)}$ denote the stationary maximum queue length.
   Consider any distribution of the initial queue-lengths vector
   $X^{(n)}_0$ such that $\none{X^{(n)}_0}$ has finite mean.
   For each time $t \geq 0$ let
   \[ \delta_{n,t} =
   2 \E[\none{X^{(n)}_0} {\bf 1}_{\none{X^{(n)}_0}>cn}]+ 2cn \
   \pr(\ninf{X^{(n)}_0} > \eta t).\]
   Then
   \[ \dw ({\cal L}(X^{(n)}_t), {\bf \Pi})
   \leq n e^{-\eta t} + 2c n \pr(M^{(n)}> \eta t)+
   2e^{-\eta n} + \delta_{n,t}.\]
   \end{lemma}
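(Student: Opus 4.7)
The plan is to upgrade the coupling argument behind the total-variation bound of Lemma~\ref{thm.stat} to a Wasserstein bound by tracking the actual $l_1$-magnitude of the disagreement rather than merely its probability. Since $\dw({\cal L}(X^{(n)}_t),{\bf \Pi}) \le \E[\none{X^{(n)}_t - Y^{(n)}_t}]$ under any coupling with $Y^{(n)}_t$ stationary, my strategy is to re-run the joint coupling of $X^{(n)}_t$ against a stationary copy $Y^{(n)}_t$ that drives the proof of Lemma~\ref{thm.stat}, and to decompose the expected $l_1$-distance over the same good and bad events, substituting $l_1$-costs for the indicator probabilities used in the TV estimate.

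There are three types of contribution to handle. On the initial-overshoot event $\{\none{X^{(n)}_0} > cn\}$ I pay the honest $l_1$-cost via the triangle inequality $\none{X_t - Y_t} \le \none{X_t} + \none{Y_t}$, which after controlling the expected stationary $l_1$-norm (bounded by a constant multiple of $n$ since $\lambda<1$) yields the term $2\E[\none{X^{(n)}_0}\,{\bf 1}_{\none{X^{(n)}_0} > cn}]$. On the atypical-height events $\{\ninf{X^{(n)}_0} > \eta t\}$ and $\{M^{(n)} > \eta t\}$ I use the blanket bound $\none{X_t - Y_t} \le \none{X_t}+\none{Y_t} \le 2cn$, valid on the complementary event that both $l_1$-norms remain below $cn$, multiplied by the respective probabilities to produce $2cn\,\pr(\ninf{X^{(n)}_0}>\eta t)$ and $2cn\,\pr(M^{(n)}>\eta t)$. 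Finally, on the residual coupling-failure event (good initial data but the coupling of Lemma~\ref{thm.stat} has not yet coalesced), the coupling is already designed so that the expected excess $l_1$-mass is at most $ne^{-\eta t} + 2e^{-\eta n}$, which feeds directly into the Wasserstein total.

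The main technical obstacle is the supermartingale/large-deviation estimate needed to justify the $2cn$ blanket bound above: namely, that $\sup_{s \le t} \none{X^{(n)}_s} \le cn$ with probability $1 - O(e^{-\eta n})$ under the hypothesis $c > \lambda/(1-\lambda)$. The total-population process $\none{X_s^{(n)}}$ is stochastically dominated above by an $M/M/1$-style birth-death chain with birth rate $\lambda n$ and death rate $n$ whenever all queues are non-empty, so it has negative drift above level $\lambda n /(1-\lambda)$, yielding an exponential tail for its running maximum. This step is what converts the indicator factor appearing in Lemma~\ref{thm.stat} into the explicit $2cn$ multiplier appearing in Lemma~\ref{thm.statW}. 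Once it is in hand the remaining work is organizational bookkeeping on top of the coupling already used for the total variation result.
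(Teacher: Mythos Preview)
The paper does not actually prove Lemma~\ref{thm.statW}: together with Lemma~\ref{thm.stat} it is imported from~\cite{lm04} as a preliminary result (see the opening of Section~\ref{sec.prelim}), so there is no in-paper argument to compare against. Your overall plan --- run the natural coupling against a stationary copy and replace the indicator $\ind_{X_t \ne Y_t}$ by the honest cost $\none{X_t - Y_t}$ --- is the right idea and is what one expects the companion argument in~\cite{lm04} to look like.

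One correction: the step you flag as the ``main technical obstacle'' is not needed. You propose to control $\sup_{s \le t}\none{X^{(n)}_s}$ in order to justify the blanket bound $\none{X_t - Y_t} \le 2cn$, but Lemma~\ref{lem.detcoupling}(a) already gives that $\none{X_t - Y_t}$ is non-increasing in $t$. Hence on $\{\none{X_0}\le cn\}\cap\{\none{Y_0}\le cn\}$ one has $\none{X_t - Y_t}\le\none{X_0 - Y_0}\le\none{X_0}+\none{Y_0}\le 2cn$ directly from time-$0$ data; no running-maximum estimate is required. (Control of $\none{Y_0}$ in equilibrium is exactly Lemma~\ref{lem.march04}(a), which is where the $e^{-\eta n}$ contribution comes from.)

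Conversely, the step you pass over in one sentence is where the real content sits: on the good event you assert that the expected residual $l_1$-mass is at most $n e^{-\eta t} + 2e^{-\eta n}$. This does not follow from the total-variation statement of Lemma~\ref{thm.stat} by formal substitution --- knowing $\pr(X_t\ne Y_t)$ is small does not by itself bound $\E[\none{X_t - Y_t}]$. One must go back into the coupling of~\cite{lm04} and check that it yields exponential decay of the \emph{expected} $l_1$-discrepancy (heuristically, that each unit of disagreement is removed at a uniform positive rate once both initial heights are $\le \eta t$), with an initial budget of order $n$. That is the genuine work in the Wasserstein upgrade; the rest is, as you say, bookkeeping.
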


   We now introduce the natural coupling of $n$-queue
   processes $(X_t)$ with different initial states.
   Arrival times form a Poisson process at rate $\lambda n$, and
   there is a corresponding sequence of uniform choices of lists of
   $d$ queues.
   Potential departure times form a Poisson process at rate $n$, and
   there is a corresponding sequence of uniform selections of a queue:
   potential departures from empty queues are ignored.
   These four processes are independent.
   Denote the arrival time process by ${\bf T}$, the
   choices process by ${\bf D}$, the potential
   departure time process by $\tilde{\bf T}$ and the selection process
   by $\tilde{\bf D}$.

   Suppose that we are given a sequence of arrival times $\bf t$ with
   corresponding queue choices $\bf d$, and a sequence of potential
   departure times $\tilde{\bf t}$ with corresponding
   selections $\tilde{\bf d}$ of a queue (where all these times are
   distinct).
   For each possible initial queue-lengths vector $x \in \ints_+^n$
   this yields a deterministic queue-lengths process
   $(x_t)$ with $x_0=x$: let us write
   $x_t=s_t(x;{\bf t},{\bf d},\tilde{{\bf t}},\tilde{{\bf d}})$.
   Then for each $x \in \ints_+^n$, the process
   $(s_t(x;{\bf T},{\bf D},\tilde{{\bf T}},\tilde{{\bf D}}))$
   has the distribution of a queue-lengths process with initial state $x$.
   The following lemma from~\cite{lm04} shows that the coupling has
   certain desirable properties.
   Part~(c) is not stated explicitly in~\cite{lm04},
   but it follows easily from part~(b).
   Inequalities between vectors in the statement are understood to
   hold component by component.
   `Adding a customer' means adding an arrival time and
   corresponding choice of queues.
   \begin{lemma} \label{lem.detcoupling}
   Fix any 4-tuple ${\bf t},{\bf d},\tilde{{\bf t}},\tilde{{\bf d}}$
   as above, and for each $x \in \ints_+^n$ write $s_t(x)$ for
   $s_t(x;{\bf t},{\bf d},\tilde{{\bf t}},\tilde{{\bf d}})$. Then
   \begin{description}
   \item{(a)} for each $x,y \in \ints_+^n$, both
   $\parallel s_t(x) - s_t(y) \parallel_1$
   and $\parallel s_t(x) - s_t(y) \parallel_{\infty}$ are non-increasing.
   \item{(b)}
   if $0 \leq t < t'$ and $s_t(x) \leq s_t(y)$ then
   $s_{t'}(x) \leq s_{t'}(y)$;
   \item{(c)} if ${\bf t'}$ and ${\bf d'}$
   are obtained from ${\bf t}$ and ${\bf d}$
   by adding some extra customers then, for all $t \ge 0$, $s_t(x;{\bf
   t'},{\bf d'},\tilde{{\bf t}},\tilde{{\bf d}}) \ge s_t(x;{\bf t},{\bf
   d},\tilde{{\bf t}},\tilde{{\bf d}})$.
   \end{description}
   \end{lemma}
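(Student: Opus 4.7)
The plan is to prove (a) and (b) together by induction on the number of arrivals and potential departures occurring in $(0,t]$, and then obtain (c) from (b) by inserting the extra customers one at a time. Between events the trajectories $s_t(x)$ and $s_t(y)$ are constant, so it suffices to check that $\none{s_t(x)-s_t(y)}$, $\ninf{s_t(x)-s_t(y)}$ and the componentwise ordering $s_t(x)\le s_t(y)$ each survive a single event.

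A potential departure selects the same queue $i$ in both systems and replaces its $i$-th coordinate by $\max(\cdot-1,0)$. A short four-case check on the signs of $x(i)$ and $y(i)$ gives $|x'(i)-y'(i)|\le |x(i)-y(i)|$, which handles both norms, and the monotonicity of $\max(\cdot-1,0)$ preserves the componentwise order needed for~(b).

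For an arrival with list $(i_1,\dots,i_d)$, let $a$ be the first index in the list achieving $m_x := \min_j x(i_j)$ and $b$ the first achieving $m_y := \min_j y(i_j)$. Then $x(a)\le x(b)$ and $y(b)\le y(a)$, and hence $x(a)-y(a)\le x(b)-y(b)$. If $a=b$ there is nothing to check, so assume $a\ne b$; only coordinates $a$ and $b$ change. For the $l_1$ claim, a sign analysis of $|x(a)-y(a)+1|-|x(a)-y(a)|$ and $|x(b)-y(b)-1|-|x(b)-y(b)|$ using the above inequality shows the total change is at most $0$ outside the degenerate case $x(a)=y(a)$ and $x(b)=y(b)$; but in that case both $a$ and $b$ achieve the minimum of $x$ and of $y$ along the list, and the first-in-list tie-breaking rule forces $a=b$, a contradiction. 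The $l_\infty$ claim is analogous: if $\ninf{x'-y'}$ exceeds $D:=\ninf{x-y}$, then necessarily $x(a)-y(a)=D$ (forcing $x(b)-y(b)=D$ as well) or, symmetrically, $x(b)-y(b)=-D$, and in either situation one deduces $x(a)=x(b)$ and $y(a)=y(b)$, yielding $a=b$ by tie-breaking. For the componentwise claim (b), only coordinate $a$ could violate the ordering, so we need $x(a)+1\le y(a)$; splitting on whether $a$ precedes $b$ in the list (so $y(a)>m_y$ by the first-in-list rule, giving $y(a)\ge m_y+1\ge m_x+1=x(a)+1$) or $b$ precedes $a$ (so $x(b)>m_x$, giving $m_y=y(b)\ge x(b)\ge m_x+1$, hence $y(a)\ge m_y\ge x(a)+1$) yields the bound in both cases.

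Part (c) then follows by inserting the extra customers one at a time: up to each insertion time the two processes agree, at that instant the extended process gains a customer at one coordinate while the original does not (so componentwise domination holds), and from then on both processes share exactly the same remaining driving data, so (b) propagates the domination to all later times. The main obstacle is the tie-breaking analysis at arrivals: a naive bound would allow $\none{x-y}$ to jump by $2$ and $\ninf{x-y}$ to jump by $1$ when different coordinates receive the new customer, and the degenerate tied configurations must be excluded by invoking the shared list and the first-in-list rule to force $a=b$.
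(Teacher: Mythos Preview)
Your proof is correct. The paper itself does not give a proof of this lemma: parts (a) and (b) are quoted from \cite{lm04}, and the paper only remarks that part (c), while not stated explicitly there, ``follows easily from part (b)''. Your argument supplies exactly the kind of direct event-by-event verification one would give for (a) and (b), and your derivation of (c) --- insert the extra customers one at a time, note that at the insertion instant the extended process dominates componentwise, and then invoke (b) with the common remaining driving data --- is precisely the easy deduction the paper alludes to. The careful handling of the tie-breaking rule at arrivals (ruling out the degenerate case $x(a)=y(a)$, $x(b)=y(b)$ by showing it forces $a=b$) is the one genuinely delicate point, and you treat it correctly.
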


   Next, let us consider the equilibrium distribution,
   and note some upper bounds on the total number of customers in the
   system and on the maximum queue length established in~\cite{lm04}.

   \begin{lemma} \label{lem.march04}

   (a) For any constant $c > \frac{\lambda}{1-\lambda}$, there is a
   constant $\eta>0$ such that for each positive integer $n$, in
   equilibrium the queue-lengths process $(Y^{(n)}_t)$ satisfies
   \[\pr\left( \none{Y^{(n)}_t} > c n \right) \leq e^{-\eta n}\]
   for each time $t \geq 0$.\\

   (b) For each positive integer $n$, in equilibrium the maximum
   queue-length $M^{(n)}_t$ satisfies
   \[\pr\left( M^{(n)}_t \geq j \right) \leq n \lambda^j\]
   for each positive integer $j$ and each time $t \geq 0$.

   \end{lemma}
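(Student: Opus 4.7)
My plan is to obtain (b) from a stationary flow-balance identity for the empirical tail proportions, and then to deduce (a) by combining the resulting mean bound on $\none{Y^{(n)}_t}$ with a concentration inequality for Lipschitz functionals of the supermarket chain.

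For (b), set $u_j(y) = n^{-1}\#\{i : y(i) \ge j\}$, the proportion of queues of length at least $j$. A direct rate computation shows that in state $y$ the quantity $u_j$ jumps upward by $1/n$ at rate $\lambda n\,(u_{j-1}(y)^d - u_j(y)^d)$ --- an arrival raises some queue from length $j-1$ to $j$ precisely when the minimum length among the $d$ chosen queues equals $j-1$ --- and jumps downward by $1/n$ at rate $n\,(u_j(y) - u_{j+1}(y))$, corresponding to a departure from a length-$j$ queue. Equating expected upward and downward drifts in equilibrium gives the flow-balance identity
\[ \lambda\bigl(\E[u_{j-1}(Y)^d] - \E[u_j(Y)^d]\bigr) \;=\; \E[u_j(Y)] - \E[u_{j+1}(Y)], \qquad j \ge 1. \]
Since the chain is ergodic, each coordinate $Y(i)$ is almost surely finite, so $u_k(Y) \to 0$ almost surely and, being bounded in $[0,1]$, in mean as well. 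Telescoping the identity over $j, j+1, \ldots$ therefore yields $\E[u_j(Y)] = \lambda\,\E[u_{j-1}(Y)^d]$, and the elementary inequality $u_{j-1}^d \le u_{j-1}$ gives $\E[u_j(Y)] \le \lambda\,\E[u_{j-1}(Y)]$. Iterating from $\E[u_0(Y)] = 1$ we obtain $\E[u_j(Y)] \le \lambda^j$, and Markov's inequality finishes (b):
\[ \pr(M^{(n)}_t \ge j) \;=\; \pr\bigl(n\,u_j(Y_t) \ge 1\bigr) \;\le\; n\,\E[u_j(Y_t)] \;\le\; n\lambda^j. \]

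For (a), summing (b) over $j \ge 1$ gives the a priori mean bound
\[ \E[\none{Y^{(n)}_t}] \;=\; n\sum_{j\ge 1}\E[u_j(Y_t)] \;\le\; \frac{n\lambda}{1-\lambda}. \]
To upgrade this to an exponential tail for $c > \lambda/(1-\lambda)$, I would invoke a Bernstein-type concentration inequality for $1$-Lipschitz functionals of a rapidly mixing equilibrium Markov chain, of the kind developed in \cite{lm03,lm04}: note that $\none{\cdot}$ is $1$-Lipschitz under the unit-sized jumps of the chain, and by Lemma~\ref{thm.stat} the chain mixes in time $O(\log n)$. These ingredients give
\[ \pr\bigl(\none{Y^{(n)}_t} > \E[\none{Y^{(n)}_t}] + \varepsilon n\bigr) \;\le\; e^{-\Omega(n)} \]
for any fixed $\varepsilon > 0$, whence (a) follows on taking $\varepsilon = c - \lambda/(1-\lambda)$. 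The main obstacle is exactly this concentration step: the flow-balance argument for (b) is short and self-contained, but extracting an $e^{-\Omega(n)}$ tail for $\none{Y^{(n)}_t}$ from the $O(n)$ mean requires non-trivial control of the chain --- for instance a martingale concentration argument combined with the mixing bound of Lemma~\ref{thm.stat}, or a log-Sobolev-type estimate --- and this is where I would lean on the machinery of \cite{lm03,lm04} rather than re-derive it from scratch.
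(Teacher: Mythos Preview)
The paper does not actually prove this lemma: it is quoted from~\cite{lm04} as a preliminary input (see the line ``established in~\cite{lm04}'' just before the statement). So there is no in-paper proof to compare against; I can only comment on the soundness of your approach.

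Your argument for (b) is correct and clean. The telescoped balance identity $\E[u_j(Y)] = \lambda\,\E[u_{j-1}(Y)^d]$ together with $u_{j-1}^d \le u_{j-1}$ and Markov's inequality gives exactly $n\lambda^j$; this is essentially how (b) is obtained in~\cite{lm04}.

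Your argument for (a), however, has a real gap. The Lipschitz concentration available from~\cite{lm04} is precisely Lemma~\ref{lem.conca} of the present paper:
\[
\pr\bigl(|f(Y)-\E f(Y)| \ge u\bigr)\;\le\; n\,e^{-c\,u/\sqrt{n}}.
\]
With $u=\varepsilon n$ this yields only $n\,e^{-c\varepsilon\sqrt{n}}=e^{-\Omega(\sqrt{n})}$, not the $e^{-\Omega(n)}$ required by part~(a). No generic ``Bernstein-for-Lipschitz-under-rapid-mixing'' estimate will do better here: the fluctuation scale for a $1$-Lipschitz functional of $Y$ is genuinely $\sqrt{n}$, so large-deviation probabilities at scale $n$ come out sub-exponential in $\sqrt{n}$, not in $n$. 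There is also a circularity risk: the proofs of Lemmas~\ref{thm.stat} and~\ref{lem.conca} in~\cite{lm04} themselves use the bound of Lemma~\ref{lem.march04}(a) as an input, so you cannot lean on them to derive it.

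The actual proof of (a) in~\cite{lm04} does not go via Lipschitz concentration at all. It exploits the special structure of the \emph{total} count $\none{Y}$: one shows (by the natural monotone coupling) that $\none{Y}$ is stochastically dominated by the total in the $d=1$ system, which is a sum of $n$ independent geometric$(\lambda)$ random variables with mean $\lambda/(1-\lambda)$; Cram\'er/Chernoff for this i.i.d.\ sum then gives $\pr(\none{Y}>cn)\le e^{-\eta n}$ for every $c>\lambda/(1-\lambda)$. So the correct route for (a) is a stochastic-domination argument specific to the total number of customers, not generic concentration of measure.
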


   We shall require an upper bound on the maximum length $M^{(n)}$
   of a queue in equilibrium, from Section 7 of~\cite{lm04}. Let
   $i^*=i^*(n)$ be the smallest integer $i$ such that
     ${\lambda^{{d^i-1}\over {d-1}}}< n^{-1/2}\ln^2 n$. Then
     $i^* =\ln \ln n/\ln d+O(1)$; and
     with probability tending to 1 as $n \to \infty$,
     if $d=2$ then $M^{(n)}$ is $i^*$ or $i^* +1$, and if $d \geq 3$ then
     $M^{(n)}$ is $i^*-1$ or $i^*$. The bound we need is that if
   $r=r(n)=O(\ln n)$ then
   \begin{equation} \label{eqn.ub3}
   \pr(M^{(n)} \geq i^*+1+r) = e^{-\Omega(r \ln n)}.
   \end{equation}

   Now we state some concentration of measure results for the
   queue-lengths process $(X^{(n)}_t)$.
   Let us begin with the equilibrium case, where we use the notation
   $Y^{(n)}$.
   Recall that ${\cal F}_1$ denotes the set of 1-Lipschitz functions on
   $\ints_+^n$.
   (We suppress the dependence on $n$.)
   \begin{lemma} \label{lem.conca}
   There is a constant $c > 0$ such that the following holds.
   Let $n \ge 2$ be an integer and consider the $n$-queue system in
   equilibrium.
   Then for each $f \in {\cal F}_1$ and each $u \geq 0$
   \[ \pr \left( |f(Y^{(n)}) - \E[f(Y^{(n)})]| \geq u \right)
   \leq n e^{-c u/ n^{\frac12}}.\]
   \end{lemma}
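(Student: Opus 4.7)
The plan is to run the queue-lengths chain from a fixed initial state for a carefully chosen time $T=T(u)$, use Lemma~\ref{thm.stat} to pass from ${\cal L}(X^{(n)}_T)$ to ${\bf\Pi}^{(n)}$, and then use McDiarmid's bounded-differences inequality applied to the Poisson events driving the dynamics to control ${\cal L}(X^{(n)}_T)$. The choice $T=\Theta(u/\sqrt n)$ balances the mixing error against the Azuma error.

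First, I would start the chain from $X^{(n)}_0=(0,\ldots,0)$, so that $\none{X^{(n)}_0}=\ninf{X^{(n)}_0}=0$ and $\delta_{n,t}=0$ in Lemma~\ref{thm.stat}. Fix $T=\gamma u/\sqrt n$ for a small constant $\gamma>0$. Lemma~\ref{thm.stat} then gives, for some $\eta>0$,
\[\dtv({\cal L}(X^{(n)}_T),{\bf\Pi}^{(n)}) \le n e^{-\eta T}+2e^{-\eta n}.\]
Using the tail bound on $\ninf{Y^{(n)}}$ from Lemma~\ref{lem.march04}(b), a simple truncation argument shows $|\E f(Y^{(n)})-\E f(X^{(n)}_T)|$ to be negligible compared with the target bound, so it suffices to prove the concentration inequality for $f(X^{(n)}_T)$.

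Next, I would parameterise the dynamics on $[0,T]$ by the Poisson processes of arrivals (at rate $\lambda n$) and potential departures (at rate $n$), with i.i.d.\ uniform marks giving queue choices and selections. Let $N$ be the total number of events in $[0,T]$; then $N$ has a Poisson distribution with mean $(1+\lambda)nT$, and by Chernoff $\pr(N>m)\le e^{-\Omega(nT)}$ with $m=2(1+\lambda)nT=O(u\sqrt n)$. Conditional on $N=k\le m$, the $k$ marked events are i.i.d.\ and $f(X^{(n)}_T)=g(Z_1,\ldots,Z_k)$ for some function $g$. The crucial observation is that replacing one $Z_i$ by an independent copy keeps the two resulting coupled trajectories identical until the $i$-th event, at which time they differ by at most $2$ in $L^1$; by Lemma~\ref{lem.detcoupling}(a) this $L^1$ gap is non-increasing thereafter and so remains at most $2$ up to time $T$. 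Since $f$ is $1$-Lipschitz, each bounded-differences constant of $g$ is at most $2$, and McDiarmid's inequality yields, conditionally on $N=k$,
\[\pr\bigl(|f(X^{(n)}_T)-\E[f(X^{(n)}_T)\mid N]|\ge u\bigm| N=k\bigr) \le 2\exp\bigl(-u^2/(2k)\bigr) \le 2\exp\bigl(-u^2/(2m)\bigr).\]
With $m=O(u\sqrt n)$ this is $2\exp(-c' u/\sqrt n)$. Assembling all contributions---the TV gap $ne^{-\eta\gamma u/\sqrt n}+2e^{-\eta n}$, the Chernoff tail on $N$, the McDiarmid bound, and the small shift between $\E f(X^{(n)}_T)$ and $\E f(Y^{(n)})$---gives the claimed bound $O(ne^{-cu/\sqrt n})$, provided $\gamma$ is taken small so that the exponents line up.

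The main obstacle is the bounded-differences step: a priori, resampling one event among $\Theta(u\sqrt n)$ could cascade and affect the trajectory by an amount growing with $T$. Lemma~\ref{lem.detcoupling}(a) is exactly what prevents this, by keeping each Doob-martingale increment at a universal constant. A secondary care point is the tuning of $T$, which must scale with $u$ (rather than being fixed at $\Theta(\log n)$) so that the three error terms---the TV gap, the Chernoff tail on $N$, and the Azuma exponent---are all of comparable magnitude $O(ne^{-cu/\sqrt n})$.
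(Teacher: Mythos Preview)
The paper does not actually give a proof of this lemma: it is stated in Section~\ref{sec.prelim} as a preliminary result imported from~\cite{lm04}, so there is no in-paper argument to compare against. That said, your strategy---run from $X_0^{(n)}={\bf 0}$ for time $T=\gamma u/\sqrt n$, control the distance to equilibrium by rapid mixing, and control $f(X_T^{(n)})$ by bounded differences on the Poisson-driven events---is exactly the standard route and is essentially how the result is obtained in~\cite{lm04}. Note in particular that the time-dependent Lemma~\ref{lem.conc2} already packages your bounded-differences step: with $t=T=\gamma u/\sqrt n$ the denominator $nt+u$ is $\Theta(u\sqrt n)$, giving precisely the exponent $-cu/\sqrt n$.

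Two small points to tidy. First, the sentence ``identical until the $i$-th event'' is not quite right if you let $Z_i$ carry its own time and then resample it, since the new copy may occur earlier or later. The clean fix is to condition on the realised Poisson point process of event times \emph{and types} (arrival versus potential departure); the remaining randomness is then the i.i.d.\ marks (queue choices and queue selections), and resampling the $i$-th mark changes the state at that fixed time by at most~$2$ in $\ell_1$, after which Lemma~\ref{lem.detcoupling}(a) applies verbatim to keep the gap at most~$2$. Second, for the shift $|\E f(X_T^{(n)})-\E f(Y^{(n)})|$ you should use the Wasserstein bound of Lemma~\ref{thm.statW} rather than the total-variation bound of Lemma~\ref{thm.stat}, since $f$ is $1$-Lipschitz but not bounded; with $X_0^{(n)}={\bf 0}$ one has $\delta_{n,t}=0$ there, and Lemma~\ref{lem.march04}(b) handles the $\pr(M^{(n)}>\eta t)$ term. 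Finally, it is worth recording explicitly that the bound $ne^{-cu/\sqrt n}$ is at least~$1$ whenever $u\le c^{-1}\sqrt n\,\ln n$, so only the regime $u\gg\sqrt n\,\ln n$ needs attention; this is what allows the mixing error $ne^{-\eta T}$ and the expectation shift to be absorbed.
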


   Let $\ell(k,x)$ denote $|\{j:x(j) \geq k\}|$, the number of queues
   of length at least $k$. Thus $\ell(k,x) = n u(k,x)$. Tight
   concentration of measure estimates for the random variables
   $\ell(k,Y)$ may be obtained directly from the last lemma.

   \begin{lemma} \label{lem.concb}
   Consider the stationary queue-lengths vector $Y^{(n)}$, and
   for each non-negative integer $k$ let
   $\ell(k) = \E[\ell(k,Y^{(n)})]$.  Then for any constant $c>0$,
   \[\pr (\sup_{k} |\ell(k,Y^{(n)})- \ell(k)| \geq c n^{\frac12} \ln^2 n) =
   e^{-\Omega(\ln^2n)}.\]
   Also, there exists a constant $c >0$ such that
   \[\sup_k \pr (|\ell(k,Y^{(n)})- \ell(k)| \geq c n^{\frac12} \ln n)
   =o(n^{-2}).\]
   Furthermore, for each fixed integer $r \ge 2$
   \[\sup_k |\E[\ell(k,Y^{(n)})^r] - \ell(k)^r| = O(n^{r-1}\ln^2 n).\]
   \end{lemma}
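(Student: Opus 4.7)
The plan rests on the fact that for each fixed $k\ge 0$ the map $x \mapsto \ell(k,x) = \sum_{j=1}^n \ind_{x(j) \geq k}$ is 1-Lipschitz on $\ints_+^n$ in the $\ell_1$-norm (changing a coordinate by $1$ shifts the count by at most $1$). Lemma~\ref{lem.conca} therefore gives, with the same constant $c$,
$$ \pr(|\ell(k, Y^{(n)}) - \ell(k)| \geq u) \leq n e^{-cu/n^{1/2}} \quad \mbox{for every } k\ge 0,\ u \ge 0. $$
The second stated inequality follows at once by taking $u = c' n^{1/2} \ln n$ with $c'$ so large that $cc'>3$: the right-hand side is then $n^{1-cc'} = o(n^{-2})$ uniformly in $k$.

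For the first (uniform-in-$k$) inequality, I would split at $K := i^* + 1 + \lceil \ln n \rceil$, which is $O(\ln n)$. For $0 \le k < K$, the displayed bound with $u = c'' n^{1/2} \ln^2 n$ plus a union bound over the $O(\ln n)$ values of $k$ gives $e^{-\Omega(\ln^2 n)}$. For $k \ge K$, I would apply (\ref{eqn.ub3}) with $r = \lceil \ln n \rceil$ to deduce $\pr(M^{(n)} \ge K) = e^{-\Omega(\ln^2 n)}$; on the complement $\ell(k, Y^{(n)}) = 0$ for every $k \ge K$, while by exchangeability the mean $\ell(k) \le n\,\pr(M^{(n)} \ge k)$ is already $o(n^{1/2})$ by the same tail estimate, so $|\ell(k, Y^{(n)}) - \ell(k)| = \ell(k)$ sits well below $c'' n^{1/2} \ln^2 n$.

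For the moment bound, I would write $Z_k = \ell(k, Y^{(n)}) - \ell(k)$, note that $\E[Z_k]=0$, and expand binomially:
$$ \E[\ell(k, Y^{(n)})^r] - \ell(k)^r = \sum_{i=2}^r \binom{r}{i}\ell(k)^{r-i}\,\E[Z_k^i]. $$
The pointwise concentration (second inequality), combined with the crude bound $|Z_k| \le n$, yields $\E[Z_k^2] \le (c' n^{1/2}\ln n)^2 + n^2\cdot o(n^{-2}) = O(n\ln^2 n)$, so the $i=2$ contribution is at most $\ell(k)^{r-2}\cdot O(n\ln^2 n) = O(n^{r-1}\ln^2 n)$. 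For $i\ge 3$, integrating the uniform tail (first inequality) gives $\E[|Z_k|^i] = O(n^{i/2}\ln^{2i}n)$, so each such term is $O(n^{r-i/2}\ln^{2i}n) = o(n^{r-1}\ln^2 n)$ for fixed $i\ge 3$. All bounds are uniform in $k$.

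The main obstacle is essentially bookkeeping: one must verify that the $i=2$ term dominates, which forces the use of the sharper pointwise tail of part~2 rather than the uniform tail of part~1 to control the second moment at the required order $n\ln^2 n$.
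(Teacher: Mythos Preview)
Your proof is correct. The paper itself does not give a self-contained argument here: it states that the first and third assertions are Lemma~4.2 of~\cite{lm04}, and that the second assertion ``follows directly from the preceding lemma'' (Lemma~\ref{lem.conca}), which is exactly what you do for that part. Your arguments for the first and third assertions---the union-bound split at $K=i^*+1+\lceil\ln n\rceil$ together with the tail bound~(\ref{eqn.ub3}) for large $k$, and the binomial expansion with the centred variable $Z_k$---are the natural route and match in spirit what one finds in~\cite{lm04}; in particular the paper later uses the same binomial expansion of $\ell(1,Y)^d$ around $\ell(1)$ in the proof of Lemma~\ref{lem.distq2}. Your observation that the $i=2$ term must be controlled via the sharper $O(n^{1/2}\ln n)$ threshold (part~2) rather than the $O(n^{1/2}\ln^2 n)$ threshold (part~1) is exactly the point, and your crude bound $\E[|Z_k|^i]\le (cn^{1/2}\ln^2 n)^i + n^i e^{-\Omega(\ln^2 n)}$ for $i\ge 3$ is sufficient.
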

   The first and third parts of this lemma form Lemma 4.2
   in~\cite{lm04}: the second part follows directly from the preceding
   lemma.
   We now present a time-dependent result, which
   will be essential in the proof of Theorem~\ref{thm.distq2}.
   \begin{lemma} \label{lem.conc2}
   There is a constant $c>0$ such that the following holds.
   Let $n \geq 2$ be an integer, let $f \in {\cal F}_1$,
   let $x_0 \in \ints_+^n$,
   and let $X^{(n)}_0 = x_0$ almost surely.
   Then for all times $t \geq 0$ and all $u > 0$,
   \begin{equation} \label{eqn.extra3}
   \pr (|f(X^{(n)}_t) - \E[f(X^{(n)}_t)] | \geq u) \leq
   n \ e^{-\frac{c u^2}{nt+u}}.
   \end{equation}
   \end{lemma}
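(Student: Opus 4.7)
I would prove Lemma \ref{lem.conc2} by applying a Freedman-type martingale concentration inequality to the Doob martingale associated with the natural coupling of Section \ref{sec.prelim}, using Lemma \ref{lem.detcoupling}(a) to control the jumps and Poisson concentration to control the quadratic variation.

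Write $X^{(n)}_t = s_t(x_0; {\bf T}, {\bf D}, \tilde{\bf T}, \tilde{\bf D})$ and let $({\cal G}_\tau)_{\tau \ge 0}$ be the filtration generated by these four driving processes up to time $\tau$. Define $M_\tau = \E[f(X^{(n)}_t) \mid {\cal G}_\tau]$ for $\tau \in [0,t]$, so that $M_0 = \E[f(X^{(n)}_t)]$ and $M_t = f(X^{(n)}_t)$. At any jump time $\sigma$ of one of the driving processes, two alternative realisations that agree on $[0,\sigma)$ but have different outcomes at $\sigma$ differ in $l_1$-norm immediately after $\sigma$ by at most $2$, since a single arrival or potential departure changes at most two coordinates, each by at most one. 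By Lemma \ref{lem.detcoupling}(a) this $l_1$-difference is non-increasing on $[\sigma, t]$, so $\none{X_t - X'_t} \le 2$, and since $f \in {\cal F}_1$ this forces $|f(X_t) - f(X'_t)| \le 2$. Averaging over the unknown outcome at $\sigma$ gives $|M_\sigma - M_{\sigma-}| \le 2$.

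The total number $N_t$ of driving events in $[0,t]$ is Poisson with mean $(\lambda+1)nt$, and the predictable quadratic variation satisfies $\langle M \rangle_t \le 4 N_t$. Freedman's inequality for martingales with jumps bounded by $2$ therefore yields, for any $v > 0$,
\[
\pr\bigl(|M_t - M_0| \geq u,\ \langle M \rangle_t \leq v\bigr) \leq 2 \exp\!\left(-\frac{u^2}{2(v + 2u/3)}\right).
\]
Choosing $v$ to be a suitable constant multiple of $nt + u$ and bounding the Poisson tail $\pr(N_t > v/4)$ by a standard Chernoff estimate (which is $\exp(-\Omega(u))$ once $v$ exceeds a fixed multiple of $nt+u$) gives the stated bound; the leading factor $n$ comfortably absorbs the polynomial slack in both pieces.

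The main obstacle is the jump bound for $M$: without the monotonicity supplied by Lemma \ref{lem.detcoupling}(a), altering a single early event could in principle propagate and produce an arbitrarily large difference in $f(X_t)$, which would destroy the martingale-differences argument. Once this bound is in place, the remainder is a fairly standard Freedman-plus-Poisson-tail computation, very much in the spirit of the equilibrium estimate Lemma \ref{lem.conca} from \cite{lm04}, but with the quadratic variation tracked explicitly over the finite window $[0,t]$ rather than being absorbed via the stationary law.
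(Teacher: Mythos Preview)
The paper does not prove Lemma~\ref{lem.conc2}: it is quoted without proof from~\cite{lm04}, together with the other preliminary lemmas in Section~\ref{sec.prelim}. So there is no in-paper argument to compare against.

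Your approach is sound and is the natural one: form the Doob martingale on the filtration of the driving Poisson processes, control the jump of $M$ via the $l_1$-contraction of Lemma~\ref{lem.detcoupling}(a), and apply a Freedman/Bernstein-type martingale inequality. Two small remarks. First, a single arrival or potential departure changes exactly one coordinate by $\pm 1$, not two; your bound $|\Delta M_\sigma|\le 2$ is still correct, but it arises from comparing two \emph{alternative marks} at $\sigma$ (say an arrival routed to queue $i$ versus queue $j$), not from one event touching two queues. Second, the Poisson-tail detour is unnecessary. Since the jumps of $M$ are bounded by $2$ and occur at total rate $(\lambda+1)n$, the \emph{predictable} quadratic variation satisfies $\langle M\rangle_t\le 4(\lambda+1)nt$ deterministically, and Freedman's inequality then gives
\[
\pr(|M_t-M_0|\ge u)\;\le\; 2\exp\!\left(-\frac{u^2}{2\bigl(4(\lambda+1)nt+\tfrac{2}{3}u\bigr)}\right)
\]
directly, which is already of the form~(\ref{eqn.extra3}). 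Your claim $\langle M\rangle_t\le 4N_t$ conflates the predictable bracket (the one Freedman uses) with the optional bracket $[M]_t=\sum_\sigma(\Delta M_\sigma)^2\le 4N_t$; the predictable bracket is bounded by the compensator $4(\lambda+1)nt$, which short-circuits the whole $\pr(N_t>v/4)$ step. With that simplification the argument goes through cleanly.
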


   We shall also use the following extension of Bernstein's inequality,
   which will follow easily for example from
   Theorem 3.8 of~\cite{cmcd98}.
   \begin{lemma}
   \label{lem.bernstein}
   Let $f$ be a 1-Lipschitz function on $\reals^n$.
   Let ${\bf Z} = (Z_1,\ldots,Z_n)$ where $Z_1,\ldots, Z_n$ are
   independent real-valued random variables, such that $Z_j$ has
   variance at most $\sigma_j^2$ and range at most $b$, for each
   $j=1,\ldots,n$.  Let $v=\sum_{j=1}^{n} \sigma_j^2$.   Then for
   each $u > 0$
   \[ \pr(|f({\bf Z}) - \E[f({\bf Z})]| \geq u) \leq 2 \exp
   \left(-\frac{u^2}{2v + \frac23 bu} \right).\]
   \end{lemma}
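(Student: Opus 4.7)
The plan is to reduce Lemma~\ref{lem.bernstein} to a Freedman-type martingale Bernstein inequality, which is the content of Theorem 3.8 of~\cite{cmcd98}. First I would form the Doob martingale associated with $f({\bf Z})$ and the filtration ${\cal F}_k = \sigma(Z_1,\ldots,Z_k)$, setting $M_0 = \E[f({\bf Z})]$, $M_k = \E[f({\bf Z}) \mid {\cal F}_k]$ and $D_k = M_k - M_{k-1}$, so that $f({\bf Z}) - \E[f({\bf Z})] = \sum_{k=1}^{n} D_k$.

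The crux is to bound each $D_k$ and its conditional variance in terms of $b$ and $\sigma_k^2$. Define $g_k(z) = \E[f(Z_1,\ldots,Z_{k-1},z,Z_{k+1},\ldots,Z_n) \mid {\cal F}_{k-1}]$; by independence of the $Z_j$ one readily checks $D_k = g_k(Z_k) - \E[g_k(Z_k) \mid {\cal F}_{k-1}]$. Since $f$ is 1-Lipschitz in the $l_1$-norm on $\reals^n$, the function $g_k$ is 1-Lipschitz on $\reals$. Hence $|D_k| \leq b$, because the range of $g_k(Z_k)$ is at most that of $Z_k$. Applying the symmetrization identity $\var(h(Z)) = \frac{1}{2}\E[(h(Z) - h(Z'))^2]$ with $h = g_k$ and $Z'$ an independent copy of $Z_k$ yields $\var(D_k \mid {\cal F}_{k-1}) \leq \var(Z_k) \leq \sigma_k^2$, so the predictable quadratic variation $\sum_{k=1}^n \var(D_k \mid {\cal F}_{k-1})$ is deterministically bounded by $v$.

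With these two bounds in hand, I would invoke the martingale Bernstein inequality from~\cite{cmcd98}, which for a martingale with increments uniformly bounded by $b$ and predictable quadratic variation at most $v$ yields precisely the tail bound $2\exp(-u^2/(2v + \frac{2}{3}bu))$ on $\pr(|\sum_k D_k| \geq u)$. There is no real obstacle here: all the analytic work lives inside the cited martingale inequality, and the present argument only checks that the Doob martingale derived from a 1-Lipschitz function of independent variables has both bounded increments and small predictable quadratic variation so as to satisfy its hypotheses.
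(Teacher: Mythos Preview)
Your proposal is correct and follows essentially the same route as the paper: both define the auxiliary function $g_k(z)=\E[f(z_1,\ldots,z_{k-1},z,Z_{k+1},\ldots,Z_n)]$, observe that it is 1-Lipschitz and hence $g_k(Z_k)$ inherits the range bound $b$ and variance bound $\sigma_k^2$ of $Z_k$, and then appeal to Theorem~3.8 of~\cite{cmcd98}. The only difference is cosmetic: you spell out the Doob martingale and the symmetrization argument for the variance, whereas the paper simply names the ``sum of variances'' and ``maximum deviation'' in the language of that theorem.
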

   \begin{proof}
   Given fixed numbers $z_1,\ldots,z_{j-1}$ where $1 \leq j \leq n$,
   for each $z$ let
    \begin{eqnarray*} g(z)
    & = & \E[f(z_1,\ldots,z_{j-1},z,Z_{j+1},\ldots,Z_n)].
   \end{eqnarray*}
   Then the function $g$ is 1-Lipschitz, so the random variable
   $g(Z_j)$ has variance at most $\sigma_j^2$ and range at most $b$.
   Thus, in the terms of Theorem 3.8 of~\cite{cmcd98}, the `sum of
   variances' is at most $v$, and the `maximum deviation' is at
   most~$b$; and so we may use that theorem to deduce the result
   here.
  \end{proof}

   Let $u_t(k)=\E[u(k,X_t)]$, the expected proportion of
   queues of length at least $k$ at time $t$. Also let $u(k)$ denote
   $\E[u(k,Y)]$, the expected proportion of queues with at least $k$
   customers when the process is in equilibrium.
   Then $u(0)=1$ and $u(1)= \lambda$.
   Whatever the initial distribution of $X_0$, for
   each positive integer $k$,
   \begin{equation} \label{gen0}
   {{d u_t(k)}\over {dt}}=\lambda \left( \E[u(k-1,X_t)^d] -
   \E[u(k,X_t)^d] \right) -(u_t(k)-u_t(k+1));
   \end{equation}
   and for $Y$ in equilibrium
   \begin{equation} \label{gen0equil}
   0 = \lambda \left( \E[u(k-1,Y)^d] - \E[u(k,Y)^d] \right)
   -(u(k)-u(k+1)).
   \end{equation}
   In~\cite{lm04}, this last fact is used to show that $u(k)$ is
   close to $\lambda^{(d^{k}-1)/(d-1)}$;
   more precisely, for some constant $c >0$, for
   each positive integer $n$
   \begin{equation} \label{eqn.utight}
   \sup_k |u(k)-\lambda^{(d^{k}-1)/(d-1)}| \le c n^{-1} \ln^2 n.
   \end{equation}


   \section{Distribution of a single queue length}
   \label{sec.distql}

   \subsection{Equilibrium case}

   In this subsection we prove
    Theorem~\ref{thm.distql}.
    Let us note that the equilibrium distribution ${\bf \Pi}$ is
   exchangeable, and thus all queue lengths are identically distributed.

   We begin by showing that (a) in equilibrium the total variation
   distance between the marginal distribution of a given queue length
   and the limiting distribution ${\cal L}_{\lambda,d}$ is small; and
   (b) without assuming that
   the system is in equilibrium, a similar result holds after a
   logarithmic time, given a suitable initial distribution. [In fact,
   the result (a) will follow easily from (b), since by
   Lemma~\ref{lem.march04}
   and~(\ref{eqn.ub3}), if $X_0$ is in equilibrium and we set
   $c_0 > \lambda/(1-\lambda)$
   then the quantity $\delta_n$ in Proposition~\ref{prop.distql}
   below is $o(n^{-1})$.]
   Part (a) includes the upper bound part of Theorem~\ref{thm.distql} above.

   \begin{proposition} \label{prop.distql}

   \begin{description}
   \item{}
   \item{(a)}
   The equilibrium length $Y^{(n)}(1)$ of queue 1 satisfies
   \[\dtv ({\cal L}(Y^{(n)}(1)), {\cal L}_{\lambda,d}) =
   O \left( n^{-1} \ln^2 n \ \ln\ln n \right).\]
   \item{(b)} For any constant $c_0 >0$ there is a constant $c$ such that
   the following holds.
   Let $\delta_n = \pr(\none{X^{(n)}_0} > c_0 n) + \pr(\ninf{X^{(n)}_0} >
   c_0 \ln n)$.
   Then uniformly over $t \geq c \ln n$
   \[\max_{j=1,\ldots,n} \dtv ({\cal L}(X^{(n)}_t(j)), {\cal L}_{\lambda,d})
   \leq \delta_n + O(n^{-1} \ln^2 n \ \ln\ln n).\]
   \end{description}
   \end{proposition}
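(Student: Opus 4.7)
The plan is to prove (a) first, and then to deduce (b) from (a) using the rapid-mixing bound Lemma~\ref{thm.stat}.

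For part (a): by exchangeability $\pr(Y^{(n)}(1) \geq k) = u(k)$, and the limiting law has tail $v(k) := \lambda^{(d^k-1)/(d-1)}$. For laws on the non-negative integers, $\dtv \leq \dw$, and $\dw$ equals the sum of tail differences, so it suffices to estimate $\sum_{k \geq 1}|u(k) - v(k)|$. I would split this sum at $k = i^*+1$, where $i^*$ is the quantity introduced just before~(\ref{eqn.ub3}), so that $i^*+1 = O(\ln\ln n)$. For the head $1 \leq k \leq i^*+1$, each summand is $O(n^{-1}\ln^2 n)$ by~(\ref{eqn.utight}), contributing $O(n^{-1}\ln^2 n\,\ln\ln n)$ in total. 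For the tail $k \geq i^*+2$, use $|u(k) - v(k)| \leq u(k)+v(k)$ and handle each piece separately: the recursion $v(k+1) = \lambda v(k)^d$ starting from $v(i^*) < n^{-1/2}\ln^2 n$ gives $v(i^*+2) = O(n^{-d^2/2}(\ln n)^{O(1)})$ with super-exponentially decaying tail, so $\sum_{k \geq i^*+2} v(k) = o(n^{-1})$; meanwhile $u(k) \leq \pr(M^{(n)} \geq k)$, and~(\ref{eqn.ub3}) applied for $k = i^*+1+r$ with $r = 1, \ldots, O(\ln n)$, together with Lemma~\ref{lem.march04}(b) for $k = \Omega(\ln n)$, shows $\sum_{k \geq i^*+2} u(k) = n^{-\Omega(1)}$. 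Summing the two contributions yields the bound in (a).

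For part (b): apply Lemma~\ref{thm.stat} with its constant taken to be $c_0$, obtaining a constant $\eta > 0$; then choose $c \geq \max(c_0,2)/\eta$. For $t \geq c\ln n$ this ensures both $\pr(\ninf{X_0^{(n)}} > \eta t) \leq \pr(\ninf{X_0^{(n)}} > c_0\ln n)$ and $ne^{-\eta t} \leq n^{-1}$, giving
\[\dtv({\cal L}(X_t^{(n)}),{\bf \Pi}) \leq \delta_n + O(n^{-1}).\]
Total variation cannot increase under projection to a coordinate, and ${\bf \Pi}$ is exchangeable, so the same bound applies to $\dtv({\cal L}(X_t^{(n)}(j)),{\cal L}(Y^{(n)}(1)))$ for every $j$; a triangle inequality with part (a) then yields the stated bound uniformly in $j$ and $t \geq c\ln n$.

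The main technical issue is the tail estimate in (a), since the case $d = 2$ is tight: $v(i^*+2)$ is only of order $n^{-2}$ times a polylog, and one has to verify that the squaring behaviour of $v$ past $i^*$ really does drive the combined tail $\sum_{k > i^*+1}(u(k)+v(k))$ strictly below the head contribution $O(n^{-1}\ln^2 n \ln\ln n)$. Once this is in place, part (b) is essentially bookkeeping.
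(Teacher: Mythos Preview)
Your overall strategy matches the paper's: part~(a) via~(\ref{eqn.utight}) plus a cutoff at height $O(\ln\ln n)$, and part~(b) from~(a) by the mixing bound Lemma~\ref{thm.stat} and the triangle inequality, with $c = \max\{c_0,2\}/\eta$. Part~(b) is essentially identical to the paper's argument.

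For part~(a) there is a genuine, though easily repaired, gap. By passing through $\dtv \leq \dw = \sum_{k\geq 1}|u(k)-v(k)|$ you are forced to control the \emph{entire} tail sum $\sum_{k\geq i^*+2} u(k)$, and your only tool for this is $u(k)\leq \pr(M^{(n)}\geq k)$ together with~(\ref{eqn.ub3}). But~(\ref{eqn.ub3}) says only that $\pr(M^{(n)}\geq i^*+1+r)=e^{-\Omega(r\ln n)}$ with an unspecified implicit constant; with $r=1$ this gives merely $n^{-c_*}$ for some $c_*>0$, which need not be $O(n^{-1})$. So your stated conclusion $\sum_{k\geq i^*+2} u(k) = n^{-\Omega(1)}$ is not a priori dominated by the head term $O(n^{-1}\ln^2 n\,\ln\ln n)$, and ``summing the two contributions'' does not yet yield the bound in~(a). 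The fix is simple: move the split point from $i^*+1$ to $i^*+r_0$ for a fixed $r_0$ large enough that $c_* r_0 > 1$; the head still has $O(\ln\ln n)$ terms and the $u$-tail is then $O(n^{-c_* r_0}) = o(n^{-1})$.

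The paper sidesteps this entirely by working directly with $\dtv = \tfrac12 \sum_k |p(k)-q(k)|$ rather than with $\dw$. With that formula the tail beyond a cutoff $k_0$ telescopes to the two single numbers $u(k_0+1)+v(k_0+1)$, not a sum. Choosing $k_0 = \ln\ln n/\ln d + O(1)$ so that $v(k_0+1)\leq n^{-1}$, one gets $u(k_0+1)\leq v(k_0+1)+O(n^{-1}\ln^2 n)=O(n^{-1}\ln^2 n)$ straight from~(\ref{eqn.utight}), with no appeal to~(\ref{eqn.ub3}) or to any maximum-queue-length estimate. This is both shorter and avoids the constant-tracking you flagged as ``the main technical issue''.
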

Let us note here that better bounds may be obtained in the simple
case $d=1$. Here the $n$ queues behave
   independently, and ${\cal L}(Y(1)) = {\cal L}_{\lambda,d}$ for each
   $n$. The arrival rate at each queue is always $\lambda$,
   regardless of the state of all the other queues. Then it follows
   from the proof of Lemma~2.1 in~\cite{lm04}
   (which is stated as Lemma~\ref{thm.stat} here)
     that we can drop the term
   \mbox{$\pr(\none{X_0} > c_0 n)$}
   in the mixing bound of that lemma, and
   so part (b) of Proposition~\ref{prop.distql}
   holds with the bound $\delta_n + O(n^{-1} \ln^2 n
   \ln\ln n)$ replaced by $\pr(\ninf{X_0} > c_0 \ln n) + O(n^{-K})$ for any
   constant $K$.
   \smallskip

   \begin{proofof}{Proposition~\ref{prop.distql}}
   Since the distribution of $Y$ is exchangeable,
   $\pr (Y(1) \geq k)= u(k)$ for each non-negative integer $k$.
   Note that $u(0)-u(1)=1-\lambda$. Part (a) now follows
easily from~(\ref{eqn.utight}), since
\begin{eqnarray*}
\dtv ({\cal L}(Y(1)), {\cal L}_{\lambda,d}) &=& {1 \over 2}
\sum_{k=1}^{\infty} |u(k)-u(k+1) - \lambda^{1+d+\cdots
+d^{k-1}} + \lambda^{1+d+\cdots +d^k}|\\
& \leq & \sum_{k=1}^{k_0 +1} |u(k) - \lambda^{1+d+\cdots
+d^{k-1}}| + u(k_0 +1) +  \lambda^{1+d+\cdots +d^{k_0}},
\end{eqnarray*}
for any positive integer $k_0$. But if $k_0 \geq \ln\ln n / \ln d
+c$, where $c= - \ln \ln(1/\lambda)/\ln d$ then
$\lambda^{1+d+\cdots +d^{k_0}} \leq n^{-1}$, and so also $u(k_0
+1) = O(n^{-1} \ln^2n)$.

   For part (b) note that
   \begin{eqnarray*}
   \dtv ({\cal L}(X_t(j)), {\cal L}_{\lambda,d})
   & \le & \dtv ({\cal L}(X_t(j)), {\cal L}(Y(j))) +
   \dtv ({\cal L}(Y(j)), {\cal L}_{\lambda,d}) \\
   & \leq & \dtv ({\cal L}(X_t), {\cal L}(Y)) +
   \dtv ({\cal L}(Y(1)),{\cal L}_{\lambda,d}).
   \end{eqnarray*}
   Thus, by  part (a) and by Lemma~\ref{thm.stat}, there exists a
   constant $\eta = \eta (c_0)$ such that for each time
   $t \ge \eta^{-1} c_0 \ln n$
   \[ \max_j \dtv ({\cal L}(X_t(j)), {\cal L}_{\lambda,d}) \le
   \delta_n + n e^{-\eta t} + 2 e^{-\eta n} + O(n^{-1} \ln^2 n \ \ln \ln n),\]
   and the result follows with $c_1=\max \{2,c_0\} \ \eta^{-1}$.
   \end{proofof}

\medskip

We now show that the $O \left( n^{-1} \ln^2 n \ \ln\ln n \right)$
upper bound on the total variation distance between the
equilibrium distribution of a given queue length and ${\cal
L}_{\lambda,d}$ in Proposition~\ref{prop.distql} (a) is fairly
close to optimal. The next lemma will complete the proof of
Theorem~\ref{thm.distql}.
   \begin{lemma} \label{lem.distq2}
   For an $n$-queue system in equilibrium,
   the expected proportion $u(2)$ of queues of length
   at least 2 satisfies  $u(2) \geq \lambda^{d+1} + \Omega(n^{-1})$. Hence
   \[\dtv ({\cal L}(Y^{(n)}(1)), {\cal L}_{\lambda,d}) =\Omega \left(
   n^{-1}\right).\]
   \end{lemma}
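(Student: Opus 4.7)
The plan is to reduce the lemma to a lower bound on $\var(\ell(1,Y^{(n)}))$, and then to obtain that bound by a conditioning argument on the recent Poisson arrivals. First, specialising the equilibrium equation~(\ref{gen0equil}) at $k=1$ and using $u(0,Y)\equiv 1$ together with the conservation identity $u(1)=\lambda$ -- which holds because in stationarity the arrival rate $\lambda n$ must equal the expected service rate $\E[\ell(1,Y^{(n)})]=nu(1)$ -- I derive
\[
u(2)=\lambda\,\E[u(1,Y^{(n)})^d].
\]
Jensen's inequality applied to $x\mapsto x^d$ then gives $u(2)\ge\lambda^{d+1}$; my task is to promote this to an $\Omega(n^{-1})$ strict improvement.

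To quantify the Jensen gap I set $\phi(x)=x^d-\lambda^d-d\lambda^{d-1}(x-\lambda)$, which is non-negative (since $\phi''(x)=d(d-1)x^{d-2}\ge 0$) and satisfies $\phi(x)\ge c_d(x-\lambda)^2$ on $[\lambda/2,1]$ for a positive constant $c_d=c_d(d,\lambda)$. The concentration estimate in Lemma~\ref{lem.concb} yields $\pr(u(1,Y^{(n)})<\lambda/2)=o(n^{-2})$, so
\[
\E[u(1,Y^{(n)})^d]-\lambda^d=\E[\phi(u(1,Y^{(n)}))]\ge c_d\,\var(u(1,Y^{(n)}))-o(n^{-2}).
\]
It therefore suffices to establish the variance lower bound $\var(\ell(1,Y^{(n)}))=\Omega(n)$. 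To this end I condition on the number $A$ of Poisson arrivals in the time window $[-1,0]$ preceding a stationary observation at time~$0$: in equilibrium $A$ is Poisson$(\lambda n)$, so $\var(A)=\lambda n$, and the law of total variance gives $\var(\ell(1,Y^{(n)}_0))\ge\var(\E[\ell(1,Y^{(n)}_0)\mid A])$. The monotone coupling of Lemma~\ref{lem.detcoupling}(c) shows that adding one extra arrival in $[-1,0]$ can only increase $\ell(1,Y^{(n)}_0)$, and by parts~(a)--(b) the increment is either $0$ or $1$. A short queueing argument using Lemma~\ref{lem.concb} and the positivity of $1-\lambda$ (the equilibrium fraction of empty queues) will show that this expected increment is bounded below by a positive constant $\alpha=\alpha(d,\lambda)$ independent of $n$; a straightforward linearisation then gives $\var(\E[\ell(1,Y^{(n)}_0)\mid A])\ge\alpha^2\lambda n(1+o(1))=\Omega(n)$.

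Combining these pieces yields $u(2)-\lambda^{d+1}=\Omega(n^{-1})$, and the total variation bound follows at once: since $\pr(V\ge 2)=\lambda^{d+1}$ for $V\sim{\cal L}_{\lambda,d}$,
\[
\dtv({\cal L}(Y^{(n)}(1)),{\cal L}_{\lambda,d})\ge|u(2)-\lambda^{d+1}|=\Omega(n^{-1}).
\]
The main obstacle is the variance bound: making the single-extra-arrival sensitivity estimate rigorous requires carefully tracking the added customer through the coupled dynamics over a unit time window, and verifying that uniformly in $n$, with positive probability, it both survives to time $0$ (in the sense that $\none{Y'_0-Y_0}=1$) and lands in a queue that is empty under the baseline process.
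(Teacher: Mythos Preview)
Your reduction is exactly the paper's: specialise~(\ref{gen0equil}) at $k=1$, use $u(1)=\lambda$, and reduce $u(2)-\lambda^{d+1}=\Omega(n^{-1})$ to $\var(\ell(1,Y))=\Omega(n)$ via an expansion of $\E[u(1,Y)^d]-\lambda^d$ (you use a quadratic lower bound on $\phi$, the paper uses the exact binomial expansion and bounds the $s\ge 3$ terms by Lemma~\ref{lem.concb}; these are equivalent). Where you diverge is in the variance bound itself. The paper does \emph{not} go through the law of total variance and a pointwise slope estimate for $a\mapsto \E[\ell(1,Y_0)\mid A=a]$; instead it argues directly on events: taking a conditional median $x$ of $F=\ell(1,Y_1)$ given $Z=\lfloor\lambda n\rfloor$ arrivals, monotonicity gives $\pr(F\le x)\ge \tfrac14+o(1)$, and on the event $\{Z\ge \lambda n+\sqrt{\lambda n}\}$ the $\Theta(\sqrt n)$ ``extra'' customers hit $\Theta(\sqrt n)$ so-called \emph{shunned} queues (queues that are empty at time~$0$, never appear on any basic customer's list, and have no service attempt in $[0,1]$), forcing $\pr(F\ge x+\delta\sqrt n)\ge \tfrac{1}{14}+o(1)$. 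Your single-extra-arrival sensitivity bound $\alpha>0$ is correct, but to prove it you will find yourself re-inventing precisely this shunned-queue construction: the clean event on which the added customer certainly contributes $1$ to $\ell(1,\cdot)$ at time~$0$ is that its first choice is a shunned queue, and $\E[\#\text{shunned}]/n$ is bounded below by a constant for $a$ near $\lambda n$. So your route works, but the ``short queueing argument'' you defer is in fact the heart of the matter, and the paper's direct two-event argument is a somewhat cleaner packaging of the same combinatorial idea.
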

\begin{proof}
Let $F_t= \ell (1,Y_t)$ be the number of non-empty queues at time
$t$, and write $F$ for $F_1$. We shall show that the variance of
$F$ is $\Omega(n)$, and from that we shall complete the proof
quickly.

Recall that we model departures by a Poisson process at rate $n$
(giving potential departure times) together with an independent
selection process that picks a uniformly random queue at each
event time of this process. If the queue selected is nonempty,
then the customer currently in service departs; otherwise nothing
happens.

Let $Z$ be the number of arrivals in $[0,1]$. By the last part of
Lemma~\ref{lem.detcoupling} we have the monotonicity result that
for all non-negative integers $x$ and $z$,
\begin{equation} \label{eqn.mono}
\pr(F \leq x | Z=z) \geq \pr(F \leq x | Z=z+1).
\end{equation}

Let the integer $x=x(n)$ be a conditional median of $F$ given that
$Z = \lfloor \lambda n \rfloor$. (It is not hard to see that $x =
\lambda n + O((n \ln n)^{\frac12})$ but we do not use this here.)
Since $\pr(F \leq x | Z = \lfloor \lambda n \rfloor) \geq \frac12$
we have by~(\ref{eqn.mono}) that
\begin{eqnarray*}
\pr(F \leq x) & \geq & \pr(F \leq x | Z \leq \lambda n) \pr(Z \leq
\lambda n)\\
& \geq & \pr(F \leq x | Z = \lfloor \lambda n \rfloor) \pr(Z \leq \lambda n)\\
& \geq & \frac14 +o(1).
\end{eqnarray*}
We shall find a constant $\delta>0$ such that $\pr(F \geq x+
\delta n^{\frac12}) \geq \frac1{14} +o(1)$, which will show that
the variance of $F$ is $\Omega(n)$ as required.

Let $A$ be the event that $Z \geq \lambda n + (\lambda
n)^{\frac12}$.  Then, by the central limit theorem, $\pr(A) \geq
\frac17 + o(1)$. We shall need to condition on $A$.

If $Z \leq \lambda n$ then all the customers arriving during
$[0,1]$ will be called {\em basic}. Otherwise, we split the $Z$
arriving customers at random into $\lfloor \lambda n \rfloor$ {\em
basic} customers plus the $Z- \lfloor \lambda n \rfloor$ {\em
extra} customers. Let $\tilde{F}$ be the corresponding number of
non-empty queues at time 1 when just the basic customers are
counted. (That is, we pretend that the extra customers never
arrive.) Then
\[\pr(\tilde{F} \geq x|A)= \pr(F \geq
x|Z=\lfloor \lambda n \rfloor) \geq \frac12.\]

Call a queue {\em shunned} if it is empty at time 0, none of the
basic customers has it on their list of $d$ choices (so it stays
empty throughout the interval $[0,1]$ if we ignore any extra
customers), and further there are no potential departures from it
in this period. We shall see that with high probability there are
linearly many such queues. Let $S$ be the number of shunned
queues. Let $0< \eta < (1-\lambda)e^{-(1+d \lambda)}$. We now
prove that
\begin{equation} \label{eqn.shunned}
\pr(S < \eta n) = e^{-\Omega(n^{\frac12})}.
\end{equation}
   Choose $0<\alpha <1-\lambda$ and $0<\beta<\alpha e^{-\lambda d}$
   such that $0< \eta < \beta /e$. Let $B$ be the event that the number
   $n-F_0$ of queues empty at time 0 is at least $\alpha n$.
   Since $\E[F_0] = u(1) n = \lambda n$ (as we noted earlier),
   by Lemma~\ref{lem.conca} we have
   $\pr(\overline{B})= e^{-\Omega(n^{\frac12})}$.
   Let $Q$ consist of the first $\lceil \alpha n \rceil \land (n-F_0)$ empty
queues at time 0, and let $R$ be the set of queues $j \in Q$ such
that no basic customer has $j$ in their list. Then
\[\E[|R| \ | \ B] \ge \lceil \alpha n \rceil
(1-\frac{1}{n})^{d\lfloor \lambda n \rfloor} \sim \alpha
e^{-\lambda d} n.\] By the independent bounded differences
inequality (see for example~\cite{cmcd89}), $\pr(|R| \leq \beta n
\ | \ B)=e^{-\Omega(n)}$. Let $R'$ be the set of the first $\lceil
\beta n\rceil \land |R|$ queues in $R$. For each queue in $R'$,
independently there is no attempted service completion during
$[0,1]$ with probability $e^{-1}$. Then, since $\eta < \beta/e$,
conditional on $|R| \geq \beta n$ with probability
$1-e^{-\Omega(n)}$ at least $\eta n$ of the queues in $R'$ have no
attempted service completion during $[0,1]$, and so are shunned.
Putting the above together yields~(\ref{eqn.shunned}), since
\[\pr(S < \eta n) \leq \pr(S<\eta n\ | \ |R| \geq \beta n) + \pr(|R| <
\beta n\ | \ B) + \pr(\overline{B}).\]

   Let $H$ (for `hit') be the number of shunned queues which are
   the first choice of some extra customer.
   Our next aim is to show~(\ref{eqn.hit}), which says that when
   the event $A$ occurs usually $H$ is large.

   Let ${\cal C}$ be the set of extra customers whose first choice is
   a shunned queue.
   Let $z= \lfloor \frac23 \eta (\lambda n)^{\frac12} \rfloor$.
   Let ${\cal C}'$ consist of the first $|{\cal C}| \land z$
   customers in ${\cal C}$.
   Let $\tilde{Z}$ denote a binomial random variable with parameters
   $\lfloor (\lambda n)^{\frac12} \rfloor$
   and $\eta$. Then
   \[\pr(|{\cal C}'| < z \ | \ S \geq \eta n,A)
   = \pr(|{\cal C}| < z \ | \ S \geq \eta n,A)
   \leq \pr( \tilde{Z} < z)  = e^{-\Omega(n^{\frac12})}.\]
   Now if no shunned queue is first choice for more than two
   customers in ${\cal C}'$, then $H \geq |{\cal C}'| - H'$, where
   $H'$ is the number of shunned queues which are first choice for
   two customers in ${\cal C}'$.  But, given $A$ and $S=s$,
   $\E[H'] \leq {z \choose 2}/s$, and the probability that some
   shunned queue is first choice for more than two customers in ${\cal C}'$
   is at most ${z \choose 3}/s^2$.
   So, setting
   $\delta = \frac13 \eta \lambda^{\frac12}$, it follows from the
   above, using Markov's inequality, that
   \[\pr(H \geq \delta  n^{\frac12} \ | \ S \geq \eta n, A) =1- O(n^{-\frac12})\]
   and so
   \begin{equation} \label{eqn.hit}
   \pr(H \geq \delta  n^{\frac12} \ | \ A) =1- O(n^{-\frac12}).
   \end{equation}

Next, we claim that $F \geq \tilde{F}+H$. To see this, start with
the basic customers, and for each of the $H$ `hit' shunned queues
throw in the first (extra) customer to hit it. With these
customers we have exactly $\tilde{F}+H$ non-empty queues at
time~1. If we now throw in any remaining extra customers, then by
Lemma~\ref{lem.detcoupling} (c) we have $F \geq \tilde{F}+H$ as
claimed. Now
\[\pr(F < x+ \delta n^{\frac12} \ | \ A)
\leq \pr(\tilde{F}<x \ | \ A) + \pr(H < \delta n^{\frac12} \ | \
A) \leq \frac12 +o(1).\]
   Hence,
   \[\pr(F \geq x+ \delta n^{\frac12}) \geq \pr(F \geq x+
   \delta n^{\frac12}|A)\pr(A) \geq \frac1{14} +o(1),\]
   which shows that the variance of $F$ is $\Omega (n)$.
   Thus we have completed the first part of the proof.
   \smallskip

   By expanding
   $\ell (1,Y)^d = [\ell (1) + (\ell(1,Y) - \ell(1))]^d$
   we find
   \begin{eqnarray*}
   0 \le \E[\ell (1,Y)^d]-\ell (1)^d =\sum_{s=2}^d {d \choose s} \E[(\ell
   (1,Y)-\ell (1))^s] \ell (1)^{d-s}.
   \end{eqnarray*}
   If $d \ge 3$, then by the second
   bound in Lemma~\ref{lem.concb} we can upper bound
   \[ | \sum_{s=3}^d {d \choose s}\E[(\ell (1,Y)-\ell (1))^s]\ell (1)^{d-s} | =O(n^{d-3/2}\ln^3 n).\]
   Also, $\E[(\ell (1,Y)-\ell (1))^2] = {\bf\rm{Var}}(F) =\Omega
   (n)$, and so it follows that $\E[\ell (1,Y)^d]-\ell (1)^d = \Omega
   (n^{d-1})$, that is $\E[u(1,Y)^d]-\lambda^d = \Omega (n^{-1})$.
   But by~(\ref{gen0equil}) with $k=1$ (since $u(1)=\lambda$, as we noted before),
   \[ \lambda \E[u(1,Y)^d] -u(2)=0,\]
   and so
   \[\dtv ({\cal L}(Y(1)), {\cal L}_{\lambda,d}) \geq
   u(2) -\lambda^{d+1} = \lambda (\E[u(1,Y)^d]-\lambda^d) = \Omega
   \left( n^{-1}\right),\]
   as required.
   \end{proof}
   \medskip



   \begin{proofof}{Corollary~\ref{cor.distql}}
   Let $k$ be a fixed positive integer.
   Let $X$ denote the random variable $Y^{(n)}(1)$,
   and let the random variable $Z$ have distribution
   ${\cal L}_{\lambda,d}$.
   Let $m_1 = \lfloor 2 \ln\ln n / \ln d \rfloor$.  Then
   $\E[Z^k \ind_{Z \geq m_1}] = e^{- \Omega(\ln^2 n)}$.
   We need a similar bound for $\E[X^k \ind_{X \geq m_1}]$.

   By the bound~(\ref{eqn.ub3}),
   $\pr [ X \geq m_1] = e^{-\Omega( \ln n \ \ln\ln n)}$.
   Also, by Lemma~\ref{lem.march04} (b), for any positive integer $m$
   \[\E[X^k \ind_{X \geq m}] \leq \E[(M^{(n)})^k \ind_{M^{(n)} \geq m}]
   \leq n \sum_{j \geq m} j^k \lambda^j.\]
   For $m$ sufficiently large we have $j^k \lambda^{j/2} \leq 1$
   for all $j \geq m$, and then the last bound is at most
   \[n \sum_{j \geq m} \lambda^{j/2}
   = n \lambda^{m/2} / (1\!-\!\lambda^{\frac12}).\]
   Now let $m_2= \lceil 4 \ln n / \ln \frac{1}{\lambda} \rceil$.  Then
   for $n$ sufficiently large $\E[X^k \ind_{X \geq m_2}]
   \leq e^{-\ln n + O(1)}$.
   Putting the above together we have
   \[ \E[X^k \ind_{X \geq m_1}] \leq m_2^k \pr[ X \geq m_1] + \E[X^k \ind_{X \geq m_2}]
   \leq e^{-\ln n +O(1)}.\]
   Hence
   \begin{eqnarray*}
   |\E[X^k]-\E[Z^k]| & \leq &
   m_1^k \dtv (X,Z) + \E[X^k \ind_{X \geq m_1}] + \E[Z^k \ind_{Z \geq m_1}]\\
   & \leq & e^{-\ln n + O(\ln\ln n)}.
   \end{eqnarray*}

   \end{proofof}


   \subsection{Non-equilibrium case}

   Here we aim to prove Theorem~\ref{cor.distq3}, where we
   still consider a single queue length but we no longer assume
   that the system is in equilibrium.  We shall first prove a
   rather general result, namely Theorem~\ref{thm.distq2};
   and then deduce Theorem~\ref{cor.distq2},
   from which Theorem~\ref{cor.distq3} will follow easily.
   We consider the behaviour of the system from time~0,
   starting from general exchangeable initial conditions.

   Theorem~\ref{thm.distq2} below shows that uniformly over all
   $t \ge 0$ the law of a typical queue length at time $t$ is close to
   ${\cal L}_{v_t}$, where $v_t = (v_t(k): k=0,1,2,\ldots)$ is the
   unique solution to the system~(\ref{eqn.lln}) of differential
   equations subject to the natural initial conditions.
   The upper bound on the total variation distance involves three quantities
   $\delta_n$, $\gamma_n$ and $s_n$ defined in terms of the initial
   distribution for $X_0^{(n)}$.
   Here $\delta_n$ concerns the total number of customers and the
   maximum queue length, $\gamma_n$ concerns concentration of measure, and $s_n$
   concerns how close the proportions of queues of length at
   least $k$ are to the equilibrium proportions.

   \begin{theorem} \label{thm.distq2}
   Let $\theta > 1$.
   There exists a constant $c_2 > 0$ such that for
   any sufficiently large constant $c_1$ and any constant $c_0$
   there is a constant $\varepsilon >0$ for which the following holds.

   Assume that the law of $X^{(n)}_0$ is exchangeable (and thus
   the law of $X^{(n)}_t$ is exchangeable for all $t \ge 0$). Define
   \[\delta_n = n^{-1}\E [\none{X^{(n)}_0} \ind_{\none{X^{(n)}_0}> c_0 n}]
    + \pr(\ninf{X^{(n)}_0} > 2\varepsilon \ln n);\]
   and
   \[\gamma_n = \sup_{f} \pr (|f(X^{(n)}_0) - \E[f(X^{(n)}_0)]|
   \ge (c_1n \ln n)^{1/2} )\]
   where the supremum is over all non-negative functions
   $f \in {\cal F}_1$ bounded above by $n$.

   Let $v^{(n)}_t = (v^{(n)}_t(k): k=0,1,2,\ldots)$ be the unique
   solution to the system~(\ref{eqn.lln}) subject to the initial
   conditions $v^{(n)}_0(k)= \E[u(k,X^{(n)}_0)]$ for $k=0,1,\ldots$.
   Also let $s_n \ge 0$ be given by
   $s^2_n = \sum_{k \ge 1} (v^{(n)}_0(k)-\lambda^{(d^k-1)/(d-1)})^2 \theta^{k}$.
   Then
   \[\sup_{t \ge 0} \dtv ({\cal L}(X^{(n)}_t(1)), {\cal L}_{v^{(n)}_t})
   \leq \varepsilon^{-1}
   \left(n^{-\varepsilon}  + n^{-c_2}s_n + \delta_n + n \gamma_n
   \right).\]
   \end{theorem}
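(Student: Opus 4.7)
The plan is to reduce the total variation distance to a sum $\sum_k |w_t(k)|$ where $w_t(k) := u_t(k) - v^{(n)}_t(k)$, split the analysis at $t = T := c_* \ln n$ for a large constant $c_*$, and treat $t \ge T$ via rapid mixing while handling $t \le T$ via an ODE comparison combined with concentration of measure. Since the law of $X^{(n)}_t$ is exchangeable, $\pr(X^{(n)}_t(1) \ge k) = u_t(k)$, so
\[ \dtv({\cal L}(X^{(n)}_t(1)), {\cal L}_{v^{(n)}_t}) = \tfrac12 \sum_{k \ge 1} |(u_t(k) - u_t(k+1)) - (v^{(n)}_t(k) - v^{(n)}_t(k+1))| \le \sum_{k \ge 1} |w_t(k)|; \]
by the prescribed initial condition $v^{(n)}_0(k) = \E[u(k, X^{(n)}_0)] = u_0(k)$, so that $w_0 \equiv 0$.

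For $t \ge T$, I combine two estimates. Lemma~\ref{thm.stat}, applied with $c_*$ large in relation to its constant $\eta$, gives $\dtv({\cal L}(X^{(n)}_t), {\bf \Pi}^{(n)}) \le \delta_n + O(n^{-K})$ for any large constant $K$, and Proposition~\ref{prop.distql}(a) then yields $\dtv({\cal L}(X^{(n)}_t(1)), {\cal L}_{\lambda,d}) = O(\delta_n + n^{-1} \ln^2 n \, \ln \ln n)$. For the companion bound on $\dtv({\cal L}_{v^{(n)}_t}, {\cal L}_{\lambda,d}) \le \sum_k |v^{(n)}_t(k) - v^*(k)|$ with $v^*(k) := \lambda^{(d^k-1)/(d-1)}$, I would establish exponential contraction of the deterministic trajectory $(v^{(n)}_t)$ to the fixed point $v^*$ at some rate $\mu > 0$ in the weighted norm $\|\cdot\|_\theta^2 := \sum_k \theta^k (\cdot)^2$ that defines $s_n$: then $\|v^{(n)}_t - v^*\|_\theta \le e^{-\mu t} s_n$, and Cauchy-Schwarz against $\sum_k \theta^{-k} < \infty$ gives $\sum_k |v^{(n)}_t(k) - v^*(k)| = O(e^{-\mu t} s_n)$, which at $t = T$ is $O(n^{-c_2} s_n)$ for $c_2 := \mu c_*$.

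For $t \le T$, I subtract (\ref{eqn.lln}) from (\ref{gen0}) to obtain
\[ \dot w_t(k) = \lambda \bigl[(u_t(k-1)^d - v^{(n)}_t(k-1)^d) - (u_t(k)^d - v^{(n)}_t(k)^d)\bigr] + \lambda \bigl[\epsilon_t(k-1) - \epsilon_t(k)\bigr] - (w_t(k) - w_t(k+1)), \]
where $\epsilon_t(k) := \E[u(k, X^{(n)}_t)^d] - u_t(k)^d$. Expanding the binomial,
\[ \epsilon_t(k) = \sum_{s=2}^{d} \binom{d}{s} u_t(k)^{d-s} \, \E[(u(k, X^{(n)}_t) - u_t(k))^s], \]
and I bound the centred moments as follows. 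Since $\ell(k, \cdot) = n u(k, \cdot)$ is $1$-Lipschitz, Lemma~\ref{lem.conc2} controls deviations of $\ell(k, X^{(n)}_t)$ from $\E[\ell(k, X^{(n)}_t) \mid X^{(n)}_0]$; meanwhile, by coupling monotonicity (Lemma~\ref{lem.detcoupling}), the map $x \mapsto \E[\ell(k, X^{(n)}_t) \mid X^{(n)}_0 = x]$ is itself $1$-Lipschitz in $x$, so the hypothesis $\gamma_n$ transfers to concentration of this conditional expectation around its unconditional mean $nu_t(k)$. Discarding the bad events (contributing $\delta_n + n\gamma_n$) then yields $|\epsilon_t(k)| = O(n^{-1} \,\mathrm{polylog}(n))$ uniformly in $k$ and in $t \le T$. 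Finally, I run a Gronwall estimate on $W_t := \sum_{k \ge 1} \theta^k w_t(k)^2$: mean-value expansion gives $u_t(k)^d - v^{(n)}_t(k)^d = w_t(k) \cdot O((u_t(k) \vee v^{(n)}_t(k))^{d-1})$, feeding into $\dot W_t$ as $O(W_t)$ by Cauchy-Schwarz; the noise terms contribute $O(\sqrt{W_t \cdot E_t})$ with $E_t := \sum_k \theta^k \epsilon_t(k)^2 = O(n^{-2+o(1)})$ after using the doubly-exponential decay of $u_t(k)$ inherited from (\ref{eqn.ub3}). Since $w_0 \equiv 0$, integrating yields $W_T = O(n^{-2+o(1)})$, whence $\sum_k |w_t(k)| \le (\sum_k \theta^{-k})^{1/2} W_t^{1/2} = O(n^{-\varepsilon})$.

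The main obstacle is choosing $\theta > 1$ so that the Gronwall argument closes simultaneously on three fronts: large enough that the weighted noise $\|\epsilon_t\|_\theta$ is summable (exploiting the doubly-exponential decay of $v^*$ and hence of $u_t$), small enough that Cauchy-Schwarz conversion $\ell^2_\theta \to \ell^1$ costs only a constant, and compatible with the nearest-neighbour coupling between levels $k-1, k, k+1$ in the ODE for $w_t$. A secondary obstacle is the quantitative exponential stability of the limit ODE in the $\ell^2_\theta$ norm that underlies the $n^{-c_2} s_n$ term, which requires a Vvedenskaya-Dobrushin-Karpelevich-style spectral-gap analysis that is not stated in the preliminaries. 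The use of the time-dependent concentration in Lemma~\ref{lem.conc2} is only useful for $t = O(\ln n)$, which is exactly what dictates the splitting time $T = c_* \ln n$.
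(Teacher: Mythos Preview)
Your overall architecture matches the paper's: split at a time $T$ of order $\ln n$, use concentration plus a Gronwall comparison for $t\le T$, and use mixing plus exponential stability of the deterministic flow for $t\ge T$ (the latter being exactly Graham's Theorem~2.12 in~\cite{g04}, which is what the paper cites for the $n^{-c_2}s_n$ term). Two differences are worth flagging, one cosmetic and one substantive.

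First, the paper runs the short-time Gronwall in the $\ell^\infty$ norm rather than your weighted $\ell^2_\theta$. The operator $A$ on the right of~(\ref{eqn.lln}) is $(2\lambda d+2)$-Lipschitz in $\ell^\infty$, and the error $\sup_k|\E[u(k,X_t)^d]-u_t(k)^d|=O(n^{-1}(1+t)\ln n+\gamma_n)$ is already uniform in $k$; so one obtains $\|u_t-v_t\|_\infty \le C'(n^{-1}\ln^2 n+\gamma_n)\,e^{(2\lambda d+2)t}$ directly. The conversion to total variation is then done by a tail cut: conditioning on $\ninf{X_0}\le m=\lceil 2\varepsilon\ln n\rceil$ and coupling with the empty system gives $u_t(k+m)\le u(k)+\delta_n$, so only $O(\ln n)$ levels survive. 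This is simpler than your $\ell^2_\theta$ route, and it also shows that your appeal to ``doubly-exponential decay of $u_t(k)$ inherited from~(\ref{eqn.ub3})'' is not quite right: (\ref{eqn.ub3}) is an equilibrium bound, and for a general exchangeable $X_0$ the tail of $u_t(k)$ is merely shifted by $m=O(\varepsilon\ln n)$ relative to equilibrium, not doubly-exponentially small. What you actually need (and what suffices) is that the effective support has $O(\varepsilon\ln n)$ levels, so $\sum_k\theta^k\epsilon_t(k)^2\le n^{O(\varepsilon\ln\theta)}\cdot(n^{-1}\ln^2 n+\gamma_n)^2$.

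Second, and more importantly, the paper uses the \emph{Wasserstein} mixing Lemma~\ref{thm.statW} for the long-time part, not the total-variation Lemma~\ref{thm.stat}. This is the point where your choice of ``large $c_*$'' creates a tension you have not resolved. With TV mixing you need $ne^{-\eta T}$ small, i.e.\ $c_*>1/\eta$; but your Gronwall step needs $e^{CT}\cdot n^{-1}$ small (and $e^{CT}\gamma_n\le n\gamma_n$), i.e.\ $c_*<1/C$. Nothing in the preliminaries guarantees $\eta>C$, so these two constraints may be incompatible. The paper sidesteps this entirely: since $\sup_k|u_t(k)-u(k)|\le n^{-1}\dw(\mathcal L(X_t),{\bf\Pi})$, the Wasserstein bound $\dw\le ne^{-\eta t}+\cdots$ gives $\sup_k|u_t(k)-u(k)|\le n^{-\eta\varepsilon_1}+\cdots$, which is small for \emph{any} fixed $\varepsilon_1>0$. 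Thus the split time $\varepsilon_1\ln n$ can be taken as small as the Gronwall step requires (the paper uses $\varepsilon_1=1/(2d+2)$), with no lower bound forced by the mixing rate. If you want to keep your TV-based route, you would need to justify $\eta>C$ from the proofs in~\cite{lm04}, which is not available from the stated preliminaries.
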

   It is known~\cite{vdk96} that the vector $v^{(n)}_t$
   satisfies~(\ref{decr}) for each $t\geq 0$, and so
   ${\cal L}_{v^{(n)}_t}$ is well-defined.
   Note that the above result is uniform over all positive integers
   $n$, all exchangeable initial distributions of $X^{(n)}_0$, and
   all times $t \geq 0$.
   \medskip

   We shall prove Theorem~\ref{thm.distq2} shortly, but first let us
   give a corresponding result for a particular form of initial
   conditions, which we describe in three steps.
   For each $n$, we start with an initial vector $x$ which is
   not `too extreme' ; then we allow small independent perturbations
   $Z_j$, where we quantify `small' by bounding the moment generating
   function; and finally we perform an independent uniform random
   permutation of the $n$ co-ordinates, in order to ensure exchangeability.
   If say $x={\bf 0}$ and the $Z_j$
   are identically distributed (and non-negative)
   then we may avoid the last `permuting'
   step in forming $X^{(n)}_0$, as that last step is simply
   to ensure that the distribution of $X^{(n)}_0$ is exchangeable.
   \begin{theorem}  \label{cor.distq2}
   For any constant $c_0 \geq 0$ there is a constant $\varepsilon >0$
   such that the following holds.

   Let $\beta >0$.
   Suppose the initial state $X^{(n)}_0$ is obtained as follows. Take
   an integer
   $n$-vector $x$ satisfying $\none{x} \leq c_0 n$ and
   $\ninf{x} \leq \varepsilon \ln n$.
   Let the `perturbations' $Z_1,\ldots,Z_n$ be
   independent random variables each taking
   integer values, each with variance at most $\beta$
   and each satisfying $\E[e^{Z_j/\varepsilon }] \leq \beta$.
   Denote $\tilde{Z}=(\tilde{Z}_1,\ldots,\tilde{Z}_n)$
   where $\tilde{Z}_j= (x(j)+Z_j)^+$.
     Finally let $X^{(n)}_0$ be obtained from $\tilde{Z}$ by
   performing an independent uniform random permutation of the $n$
   co-ordinates.

   Define $v^{(n)}_t$ as in Theorem~\ref{thm.distq2} above.
     Then for each $t \geq 0$, the law of $X^{(n)}_t$ is exchangeable
    and
   \[\sup_{t \ge 0} \dtv ({\cal L}(X^{(n)}_t(1)), {\cal L}_{v^{(n)}_t})
   = O(n^{-\varepsilon}).\]
   \end{theorem}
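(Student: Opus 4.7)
The plan is to deduce Theorem~\ref{cor.distq2} from Theorem~\ref{thm.distq2} by verifying that the three error terms $\delta_n$, $n^{-c_2}s_n$ and $n\gamma_n$ appearing there are each $O(n^{-\varepsilon})$ under the hypotheses here. I would apply Theorem~\ref{thm.distq2} with some fixed $\theta>1$, with $c_1$ to be taken sufficiently large, and with some $c_0'$ (in place of the $c_0$ there) exceeding the given $c_0$ by an amount depending only on $\beta$; this yields constants $c_2>0$ and $\varepsilon_T>0$. The Corollary's $\varepsilon$ is then chosen as a sufficiently small multiple of $\min(\varepsilon_T,1/\ln\theta,c_2)$. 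Exchangeability of ${\cal L}(X_t^{(n)})$ for every $t\ge 0$ is immediate: the final uniform random permutation makes ${\cal L}(X_0^{(n)})$ exchangeable, and the symmetric supermarket dynamics preserve exchangeability.

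The bounds on $\delta_n$ and $s_n$ are routine. From the pointwise inequality $\tilde Z_j\le x(j)+Z_j^+$ and $\E[e^{Z_j/\varepsilon}]\le\beta$, a Chernoff argument on the independent $Z_j^+$ makes $\pr(\none{X_0^{(n)}}>c_0'n)$ exponentially small in $n$; by Cauchy--Schwarz the truncated first moment in $\delta_n$ is then also $e^{-\Omega(n)}$. For the $\ninf{\cdot}$ summand, the event $\tilde Z_j>2\varepsilon_T\ln n$ forces $Z_j>(2\varepsilon_T-\varepsilon)\ln n$, so Markov with the exponential moment gives probability $O(n^{-(2\varepsilon_T-\varepsilon)/\varepsilon})$; with $\varepsilon\le \frac{2}{3}\varepsilon_T$ this exponent exceeds $2$ and a union bound yields $O(n^{-1})$. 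For $s_n$, I would split the series at $k_0=\lceil\varepsilon\ln n\rceil$: the low range contributes at most $O(\theta^{k_0})=O(n^{\varepsilon\ln\theta})$; for $k>k_0$, by exchangeability $v^{(n)}_0(k)=\frac{1}{n}\sum_j\pr(\tilde Z_j\ge k)\le\beta n\,e^{-k/\varepsilon}$ (valid since $k>\ninf{x}$), so the high-range sum is a geometric series of the same order provided $\theta e^{-2/\varepsilon}<1$. Hence $s_n^2=O(n^{\varepsilon\ln\theta})$, and $n^{-c_2}s_n=O(n^{-\varepsilon})$ when $\varepsilon$ is small enough.

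The main obstacle is controlling $\gamma_n$: one needs $\pr(|f(X_0^{(n)})-\E f(X_0^{(n)})|\ge(c_1n\ln n)^{1/2})=O(n^{-1-\varepsilon})$ uniformly over non-negative $f\in{\cal F}_1$ bounded by $n$. The approach is to condition on the random permutation $\sigma$: writing $f(X_0^{(n)})=f_\sigma(\tilde Z)$ with $f_\sigma$ still 1-Lipschitz, and noting that the $\tilde Z_j$ are independent with variance at most $\beta$, one can apply Bernstein's inequality (Lemma~\ref{lem.bernstein}) after truncating on the high-probability event $E=\{\max_jZ_j\le \frac{3}{2}\varepsilon_T\ln n\}$ (complement of probability $O(n^{-2})$), where the $\tilde Z_j$ have range $O(\ln n)$. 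With $v=n\beta$ and $b=O(\ln n)$, this yields conditional concentration of $f(X_0^{(n)})$ around $\E[f(X_0^{(n)})\mid\sigma]$ at probability $n^{-c_1/(8\beta)}$, polynomial in $n$ with exponent tunable by $c_1$. The delicate step is comparing $\E[f(X_0^{(n)})\mid\sigma]$ with the unconditional mean; here the symmetrisation $\hat f(x)=(n!)^{-1}\sum_\pi f(\pi x)$ gives $\hat f(X_0^{(n)})=\hat f(\tilde Z)$, which is 1-Lipschitz in the independent $\tilde Z$ and again Bernstein-concentrates around $\E f(X_0^{(n)})$, while the residual $f(X_0^{(n)})-\hat f(\tilde Z)$ has zero conditional mean and is controlled via a Maurey-type variance bound for random permutations, exploiting the independence and bounded variance of the $\tilde Z_j$. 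Taking $c_1$ large enough then gives $\gamma_n=O(n^{-1-\varepsilon})$, and combining the three estimates in Theorem~\ref{thm.distq2} delivers the claimed $O(n^{-\varepsilon})$ bound.
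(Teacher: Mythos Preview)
Your overall plan and your handling of $\delta_n$ and $s_n$ match the paper's proof closely. The issue is your treatment of $\gamma_n$. You propose to control the residual $f(X_0^{(n)})-\hat f(\tilde Z)$ by a Maurey-type concentration bound over the random permutation $\sigma$, conditional on $\tilde Z$. But on your truncation event the function $\sigma\mapsto f(\sigma\tilde Z)$ changes under a transposition $(ab)$ by at most $2|\tilde Z_a-\tilde Z_b|$, and this Lipschitz constant is of order $\ln n$ (since $\ninf{x}\le\varepsilon\ln n$ and $\max_j Z_j=O(\ln n)$). The standard bounded-differences inequality on $S_n$ then gives a sub-Gaussian variance proxy of order $n\ln^2 n$, so at threshold $(c_1 n\ln n)^{1/2}$ the exponent is only $O(c_1/\ln n)$ and the tail bound is trivial. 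The sharper Efron--Stein variance is $O(\ln n)$, but that only yields (via Chebyshev) a tail of order $(\ln n)/(c_1 n)$, hence $n\gamma_n=O(\ln n)$, not $o(1)$ as needed. So as stated the permutation step does not close.

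The paper avoids this entirely. It observes that the quantities $u_t$ and $v_0$ are unchanged if one drops the permutation step, and that in the proof of Theorem~\ref{thm.distq2} the concentration hypothesis $\gamma_n$ is only ever applied to the permutation-symmetric functions $x\mapsto\E[\ell(k,X_t)\mid X_0=x]$. For such $f$ one has $f(X_0^{(n)})=f(\tilde Z)$ identically, so it suffices to bound $\gamma_n$ for $\tilde Z$, which has independent coordinates. Then, after truncating on $\{\max_j Z_j\le \ln^2 n\}$ (complement of probability $O(e^{-\ln^2 n/\varepsilon})$), the conditioned $\tilde Z_j$ have variance at most $2\beta$ and range $O(\ln^2 n)$, and a single application of Lemma~\ref{lem.bernstein} gives $\gamma_n\le 2n^{-c_1/(4\beta)}+O(e^{-\ln^2 n})$, so $n\gamma_n=O(n^{-1})$ once $c_1$ is large. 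In short: do not fight the permutation; drop it, because the only Lipschitz functions that matter in Theorem~\ref{thm.distq2} are symmetric.
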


   Note that if we set $x$ to be the zero vector above then we obtain the simpler
   result we stated earlier as Theorem~\ref{cor.distq3}.
   It remains here to prove Theorems~\ref{thm.distq2} and~\ref{cor.distq2}.

   \smallskip

   \begin{proofof}{Theorem~\ref{thm.distq2}}
   We aim to prove that there is a constant $c_2 >0$
   such that for any constant $c_1$ large enough
     and any constant $c_0$, there is a
   constant $\epsilon >0$ such that
     we have
   \begin{equation} \label{eqn.distrib1}
   \sup_{t \geq 0} \parallel u_t - v_t \parallel_{\infty} =
   O(n^{-\epsilon}+n^{-c_2}s_n + \delta_n+ n \gamma_n),
   \end{equation}
   uniformly over the possible distributions for $X^{(n)}_0$.
   Here $u_t$ refers to the vector $u_t(k), k=0,1,\ldots$, where as
   before $u_t(k) = \E[u(k,X_t)]$.

   To see that the theorem will follow
   from~(\ref{eqn.distrib1}), consider a state $x$ with
   $\ninf{x} \leq m$. Couple $(X_t)$ where $X_0=x$ with $(\tilde{X}_t)$ where
   $\tilde{X}_0={\bf 0}$ in the way described in the previous
   section. By Lemma~\ref{lem.detcoupling},
   we always have
   $\parallel X_t-\tilde{X}_t \parallel_{\infty} \leq m$.  Hence
   always $u(k+m,X_t) \leq u(k,\tilde{X}_t)$, and so
   $\E[u(k+m,X_t)] \leq \E[u(k,\tilde{X}_t)] \leq u(k)$
   (where the last inequality again uses Lemma~\ref{lem.detcoupling}).
   Thus, dropping the condition that $X_0=x$,
   for any non-negative integers $m$ and $k$,
   we have $\E[u(k+m,X_t)|\ninf{X_0} \leq m] \leq u(k)$; and so
   \[ u_t(k+m) =  \E[u(k+m,X_t)] \leq u(k) + \pr(\ninf{X_0} >m).\]
   We shall choose a (small) constant $\epsilon >0$ later,
   and let $m = \lceil 2\epsilon \ln n \rceil$.
   Note that $u(k) =o(n^{-1})$ if $k = \Omega(\ln n)$ by~(\ref{eqn.ub3}),
   and that $\pr (\ninf{X_0} > m) \le \delta_n$. But now we may complete the
   argument as in the proof of Proposition~\ref{prop.distql} (a).

   It remains to prove~(\ref{eqn.distrib1}).
   First we deal with small $t$.
   We begin by showing that there is a constant $\tilde{c} >0$
   such that for each positive integer~$k$, each integer $n \ge 2$,
   each $t>0$ and each $w > 0$,
   \begin{equation} \label{ineql}
   \pr (|\ell (k,X_t)- n u_t(k) | \ge w ) \le
   n e^{-\frac{\tilde{c} w^2}{nt + w}}+
   \pr (|\E[\ell (k,X_t)|X_0]-nu_t(k)| > w/2).
   \end{equation}
   To prove this result,
   note that the left side above can be written as
   \[\sum_{x \in \ints_+^n}
   \pr (|\ell (k,X_t)- n u_t(k) | \ge w \ | \ X_0=x) \Pr(X_0=x).\]
   Let $S= \{x \in \ints_+^n:
|\E[l(k,X_t)|X_0=x] - n u_t(k)| \leq w/2\}$.
   Then by Lemma~\ref{lem.conc2}, 
   there is a constant $\tilde{c}$
   such that for each $x \in S$
   \begin{eqnarray*}
   && \pr (|\ell (k,X_t)- n u_t(k) | \ge w \ | \ X_0=x)\\
   & \leq & \pr (|\ell (k,X_t)- \E[l(k,X_t)|X_0=x] | \ge w/2 | \ X_0=x)\\
   & \leq & n e^{-\frac{\tilde{c} w^2}{nt + w}}.
   \end{eqnarray*}
   Also, the contribution from each $x \in \ints_+^n \setminus S$
   is at most $\pr(X_0=x)$, and summing shows that the total
contribution is at most
   \[ \pr(X_0\in \ints_+^n \setminus S) =
   \pr (|\E[\ell (k,X_t)|X_0]-nu_t(k)| > w/2).\]
   This completes the proof of~(\ref{ineql}).

   Now the function $\E[\ell (k,X_t)|X_0=x]$ is a 1-Lipschitz function
   of~$x$ by Lemma~\ref{lem.detcoupling}, and is non-negative
   and bounded above by $n$, so
   \[\pr (|\ell (k,X_t)- n u_t(k) | \ge w ) \le
   n e^{-\frac{\tilde{c} w^2}{nt + w}}+ \sup_f\pr (|f(X_0)-\E[f(X_0)]| > w/2),\]
   where the supremum is over all functions $f \in {\cal F}_1$ such that
   $0 \leq f(x) \leq n$ for each $x \in \ints_+^n$.
   Hence, replacing $w$ by $nw$,
   \begin{eqnarray*}
   \sup_{k \in \nats} \pr(|u(k,X_t)-u_t(k) | \ge w)
   & \le & n e^{-\frac{\tilde{c} w^2 n}{2t}} + n e^{-\frac{\tilde{c} w n}{2}}\\
   & + & \sup_f\pr (|f(X_0)-\E[f(X_0)]| > nw/2),
   \end{eqnarray*}
   since $e^{-\frac{a}{b+c}} \leq e^{-\frac{a}{2b}} +
   e^{-\frac{a}{2c}}$ for $a,b,c>0$.
   Now let $c_1 \ge 2/\tilde{c}$ be a sufficiently large constant and set
   $w= 2\left(c_1 n^{-1} (1+t) \ln n \right)^{\frac12}$.
   Then by the last inequality, we have, uniformly over $t \geq 0$
   \begin{eqnarray*}
   &&\sup_{k \in \nats} \pr (|u(k,X_t)-u_t(k) | \ge w)\\
   & \le & n \ e^{-(4+o(1))\ln n}
   + \sup_f\pr (|f(X_0)-\E[f(X_0)]| \ge \left(c_1 n \ln n \right)^{\frac12})\\
   & \le & o(n^{-2}) +\gamma_n;
   \end{eqnarray*}
that is, uniformly over $t \geq 0$
   \begin{equation} \label{eqn.bound}
   \sup_{k \in \nats} \pr \left( |u(k,X_t)-u_t(k) | \ge
   \left(4c_1 n^{-1} (1+t) \ln n \right)^{\frac12} \right) \le
   o(n^{-2})+\gamma_n.
   \end{equation}
Hence, arguing as in the proof of Lemma 4.2 in~\cite{lm04}
(Lemma~\ref{lem.concb} above), uniformly over $t \ge 0$  \[
\sup_{k \in \nats}\{|\E[u(k,X_t)^d]-u_t(k)^d| \} = O(n^{-1} (1+t)
\ln n +\gamma_n).\]
We shall use this result only for $0 \leq t \leq \ln n$.
By~(\ref{gen0}) we have, uniformly over $0 \leq t \leq \ln n$ and
over $k$,
\[ {{du_t(k)} \over dt} = \lambda (u_t(k-1)^d-u_t(k)^d) -
(u_t(k)-u_t(k+1)) + O( n^{-1} \ln^2 n+\gamma_n).\]
Now recall that $v_t$ is the unique solution to~(\ref{eqn.lln})
subject to the initial conditions $v_0(k)=u_0(k)$ for each $k$.
Also, $2 \lambda d +2$ is a Lipschitz constant of
equation~(\ref{eqn.lln}) under the infinity norm. More precisely,
$2 \lambda d +2$ is a Lipschitz constant of the operator $A$
defined on the space of all vectors $v=(v(k): k \in \nats)$ with
components in $[0,1]$ by
\[ (Av)(k) = \lambda (v(k-1)^d -v(k)^d) -(v(k)-v(k+1)).\]
Equation~(\ref{eqn.lln}) may be expressed succinctly in terms of
$A$ as
\[ \frac{dv_t}{dt} = A v_t.\]
Thus by Gronwall's Lemma (see for instance~\cite{ek}) there exists
a constant $c_3>0$ such that uniformly over $0 \leq t \leq \ln n$
\[ \parallel u_t - v_t \parallel_{\infty} \leq  c_3 (n^{-1} \ln^2
n+\gamma_n)\ln n \ e^{(2\lambda d +2)t}. \] Let $\varepsilon_1 =
\frac{1}{2d+2}$ so $0 < \varepsilon_1 < \frac{1}{2\lambda d+2}$.
Then there exists $\varepsilon_2>0$ such that, uniformly over $t$
with $0 \leq t \leq \varepsilon_1 \ln n$, we have
\begin{equation} \label{eqn.smallt}
\parallel u_t - v_t \parallel_{\infty} = O(n^{-\varepsilon_2}+ n \gamma_n).
\end{equation}

For larger $t$ we introduce the equilibrium distribution into the
argument. Note that $\sup_k |u_t (k) -u (k)| \leq n^{-1} \dw({\cal
L}(X_t),{\bf \Pi})$. Consider Lemma~\ref{thm.statW} with $c=\max\{
c_0, \frac{2\lambda}{1-\lambda} \}$, let $\eta>0$ be as there, and
suppose $2\varepsilon \le \eta \varepsilon_1$. By~(\ref{eqn.ub3}),
there is a constant $c_4>0$ such that $\pr(M \geq \ln\ln n/\ln d
+c_4) = o(n^{-2})$. Hence by Lemma~\ref{thm.statW}, there is a
$\varepsilon_3
>0$ such that, uniformly over $t \ge \varepsilon_1 \ln n$,
\[ \ninf{u_t -u} =o(n^{-\varepsilon_3} + \delta_n).\]
Also, by inequality~(\ref{eqn.utight}) there is a constant $c_5$
such that for each $n$
\[ \sup_k |u(k) - \lambda^{(d^k-1)/(d-1)}| \le c_5 n^{-1}
\ln^2 n.\] Let $s_n(t) \ge 0$ be given by $s^2_n(t) = \sum_{k \ge
1} (v^{(n)}_t(k)-\lambda^{(d^k-1)/(d-1)})^2 \theta^{k}$; thus
$s_n(0)=s_n$. We now use Theorem 2.12 in~\cite{g04}, which says
that for some constants $\gamma_{\theta} > 0$ and
   $C_{\theta} < \infty$,
\[s_n(t)^2 \le e^{-\gamma_{\theta} t} C_{\theta} s_n^2.\]
Hence we deduce that there exists a constant $c_6>0$ such that for
all $t \ge 0$
\[ \sup_k |v_t(k) - \lambda^{(d^k-1)/(d-1)}| \le c_6 e^{-t/c_6} s_n.\]
Combining the last three inequalities, we see that for some
$\varepsilon_4>0$, uniformly over $t \ge \varepsilon_1 \ln n$, we
have
\[ \parallel u_t - v_t \parallel_{\infty} =
O(n^{-\epsilon_4}+n^{-\varepsilon_1/ c_6}s_n+ \delta_n).\] This
result together with~(\ref{eqn.smallt}) completes the proof
of~(\ref{eqn.distrib1}) (with $c_2 = \varepsilon_1/c_6$), and thus
of the whole result.
\end{proofof}

\begin{proofof}{Theorem~\ref{cor.distq2}}
Clearly $X_0$ is exchangeable, and thus by symmetry so is $X_t$
for each $t \geq 0$. Let us ignore the last step in constructing
the initial state, which involves the random permutation. Note
that this does not affect $u_t = (u_t(k): k=0,1,2,\ldots)$, and
similarly $v_0(k) = \E[u(k,X_0)] = \E[u(k,\tilde{Z})]$.

   We shall choose a small constant $\varepsilon > 0$ later,
   and assume that $\E[e^{Z_j/\varepsilon }] \leq \beta$.
   By Markov's inequality,
   for each $n$ and each $b \geq 0$,
   \begin{equation} \label{eqn.maxzj}
   \pr(\max_j Z_j \geq b) \leq
   \sum_j e^{- b/\varepsilon } \E[e^{Z_j/\varepsilon}]
   \leq n \beta e^{- b/\varepsilon }.
   \end{equation}
   Let us take $b=b(n)=\ln^2n$, and let $A=A(n)$ be the event that
   $\max_j Z_j \leq b$.  Then by~(\ref{eqn.maxzj}) we have
   $\pr(\bar{A}) = O(e^{-\ln^2 n})$.
   Given a function $f \in {\cal F}_1$ such that
   $0 \leq f(x) \le n$ for all $x \in \ints_+^n$,
   let $\mu = \E [f(X_0)]$ and let
   $\tilde{\mu }= \E[f(X_0)|A]$.
   Then uniformly over such $f$
   \[| \tilde{\mu}- \mu | =
   |\tilde{\mu} - \E[f(X_0)|\bar{A}]|  \pr(\bar{A})
   \leq n \pr(\bar{A}) = O(e^{-\ln^2 n}).\]
   Also, the variance of $Z_j$ given $Z_j \leq b$ is at most
   $var(Z_j)/\pr(Z_j \leq b)$, which is at most $2 \beta$
   for $n$ sufficiently large that $\pr(Z_j \leq b) \geq \frac12$.
   Thus given $Z_j \leq b$, the variance of $\tilde{Z}_j$ is also at most
   $2 \beta$, and further $\tilde{Z}_j$ has range at most $\frac32 b$,
   assuming that $2 \varepsilon \leq \ln n$.
   Hence, by Lemma~\ref{lem.bernstein}, for each sufficiently large
   integer $n$
   and each $w > 0$,
   \[\pr (|f(X_0)- \tilde{\mu} | \ge w | A)
   \le 2 e^{-\frac{w^2}{4n\beta + bw}},\]
   and so the quantity $n\gamma_n$ in Theorem~\ref{thm.distq2} is
   $O(n^{-1})$
   provided the constant $c_1$ is large enough.

   Note that $\E[Z_j] \leq \varepsilon \beta$ for each $j$.  We
   shall choose $0<\varepsilon \leq 1$ below.
   Let $c=\max \{c_0+\beta +1, \frac{1}{1-\lambda}\}$. Then
   $\E[\none{X_0} {\bf 1}_{\none{X_0}>c n}] = o(1)$ since
   $c > c_0 + \beta$. Now assume that $0<\varepsilon \leq 1$ is
   such that the bound in Theorem~\ref{thm.distq2}
   holds with this value of $\varepsilon$ and with $c_0=c$.
   Then by~(\ref{eqn.maxzj}), with $b= 2 \varepsilon \ln n$,
   \[ \pr(\ninf{X_0} > 2 \varepsilon \ln n) \leq n \beta e^{-2 \ln n} = O(n^{-1}).\]
   Thus the quantity $\delta_n$ in Theorem~\ref{thm.distq2} is $O(n^{-1})$.

   It remains to upper bound the term $s_n$.
   Note that
   \[s_n \le  \sum_{k \ge 1} \theta^{k}({(v_0^{(n)}(k))}^2
   + \sum_{k \ge 1} \theta^{k}\lambda^{2(d^k-1)/(d-1)}.\]
   The second of these terms certainly takes a finite value independent of $n$,
   so it suffices to ensure that the first term is
   $O(n^{c_2/2})$ where the constant $c_2>0$ is from
   Theorem~\ref{thm.distq2}.
   Now
   $v_0^{(n)}(k) \leq u(\lfloor k/2\rfloor,x) + \max_j
   \Pr(Z_j \geq k/2)$, and so
   ${v_0^{(n)}(k)}^2 \leq 2 u(\lfloor
   k/2\rfloor,x)^2 + 2\max_j \Pr(Z_j \geq k/2)^2$.
   Hence
   \begin{eqnarray*}
   \sum_{k \ge 1} \theta^{k}({(v_0^{(n)}(k))}^2
   & \le &  2\sum_{k \ge 1} \theta^{k}{(u(\lfloor k/2 \rfloor,x))}^2 \\
   & +  & 2\sum_{k \ge 1} \theta^{k}{(\max_j \pr (Z_j \ge k/2))}^2.
   \end{eqnarray*}
   Using Markov's inequality again,
   $\pr (Z_j \ge k/2) \leq \beta e^{-k/2\varepsilon}$, and so
   the second term is bounded by a constant uniformly in $n$, provided
   $\theta < e^{1/\varepsilon}$. This is certainly true if
   $\varepsilon > 0$ is small enough.
   As for the first term, if we let $k_0 = 2 \varepsilon \ln n +2$,
   we have
   \begin{eqnarray*}
   \sum_{k \ge 1} \theta^{k}{(u(\lfloor k/2 \rfloor,x))}^2
   & \le & \sum_{1 \le k \le k_0} \theta^{k}
   {(u(\lfloor k/2 \rfloor,x))}^2\\
   & \le & k_0 \ \theta^{k_0}
   \;\; \leq n^{c_2/2}
   \end{eqnarray*}
   if $\varepsilon$ is sufficiently small.
Combining the above estimates, we obtain the required bound on
$s_n$.
   \end{proofof}


   \section{Asymptotic chaos}
   \label{sec.chaos}

   \subsection{Equilibrium case}

     In this subsection we prove Theorem~\ref{thm.chaos1}
     concerning chaoticity of the queue-lengths process,
     where the system is stationary.  This result quantifies the
     chaoticity in terms of the total variation distance between the
     joint law of the queue lengths and the corresponding product laws.
     In the proof we first bound another natural measure of these
     distances, following the treatment in~\cite{s91}
     -- see~(\ref{eq.dtv}) and~(\ref{eq.dtv-1}) below.

    \begin{proofof}{Theorem~\ref{thm.chaos1}}
    By Lemma~\ref{lem.conca}, there exists a constant $c> 0$ such that
    \[\sup_{f \in {\cal F}_1} \pr \left(|f(Y)-\E f(Y)| \ge cn^{1/2} \ln n \right) =
    o(n^{-2}).\]
    Let ${\cal F}_1'$ be the set of functions $f \in {\cal F}_1$ with
    $\ninf{f} \leq n$.
    By the last result, uniformly over positive integers $a$,
    \begin{equation} \label{fconc}
    \sup_{f_1,\ldots,f_a \in {\cal F}_1'}
    \E \left[\prod_{s=1}^a | f_s(Y) - \E[f_s(Y)]| \right] \leq (cn^{1/2} \ln
    n)^a +o(an^{a-2}).
    \end{equation}
   Let ${\cal G}_1$ denote the set of measurable real-valued functions
   $\phi$ on $\ints_+$ with $\ninf{\phi} \le 1$.
   For any measurable real-valued function $\phi$ on $\ints_+$ let
   $\bar{\phi}$ be the function on $\ints_+^n$ defined by setting
   $\bar{\phi}(y)= \frac{1}{n} \sum_{i=1}^{n} \phi(y_i)$ for
   $y=(y_1,\ldots,y_n) \in \ints_+^n$.
   Observe that
   \[ \bar{\phi}(y) = \sum_{k=0}^{\infty} \phi(k)
   (u(k,y)-u(k+1,y)) = \phi(0)+ \sum_{k=1}^{\infty} (\phi(k)-\phi(k-1))
   u(k,y).\]
   Observe also that since the distribution of $Y$ is exchangeable we have
   $\E[\bar{\phi}(Y)] = \E[\phi(Y(1))]$:
   let us call this mean value $\hat{\phi}$.

   Let $f(y) = (n/2) \bar{\phi}(y) = \frac12 \sum_{i=1}^{n}
   \phi(y_i)$.
   It is easy to see that if $\phi \in {\cal G}_1$ then $f \in {\cal F}_1'$.
   Hence by~(\ref{fconc}), uniformly over positive integers $a$,
   \begin{equation} \label{eq.chaos.1}
   \sup_{\phi_1,\ldots, \phi_a \in {\cal G}_1} \E\left[\prod_{s=1}^a
   |\bar{\phi_s}(Y) -\hat{\phi_s}| \right] \leq (2c n^{-1/2} \ln n)^{a}
   + o(a 2^a n^{-2}) \leq (3c n^{-1/2} \ln n)^{a}
   \end{equation}
   for $n$ sufficiently large.
   But, by writing $\bar{\phi_s}(Y)$ as
   $(\bar{\phi_s}(Y) - \hat{\phi_s}) + \hat{\phi_s}$, we see that
   \[\E\left[\prod_{s=1}^a \bar{\phi_s}(Y)\right]-\prod_{s=1}^a \hat{\phi_s}\]
   can be expanded as
   \[\sum_{A \subseteq \{1, \ldots, a\}, |A| \ge 2}
   \E \left[\prod_{s \in A} (\bar{\phi_s}(Y)-\hat{\phi_s})\right]
   \prod_{s \in \{1, \ldots , a\} \setminus A } \hat{\phi_s}.\]
   Hence by~(\ref{eq.chaos.1}), uniformly over all positive integers
   $r \leq n^{\frac12}/ \ln n$
   \begin{eqnarray} \label{eq.chaos.2}
   && \sup_{\phi_1, \ldots, \phi_r \in {\cal G}_1}
   |\E\left[\prod_{s=1}^r \bar{\phi_s}(Y)\right]-\prod_{s=1}^r \hat{\phi_s}| \nonumber\\
   \nonumber
   & \le & \sum_{A \subseteq \{1, \ldots, r\}, |A| \ge 2}
   \E \left[\prod_{s \in A} |\bar{\phi_s}(Y)- \hat{\phi_s}|\right]\\
   & \leq & \sum_{a=2}^{r} {r \choose a} (3cn^{-\frac12} \ln n)^a
   \leq \sum_{a=2}^{\infty} (\frac{3ec}{a} \ rn^{-\frac12} \ln n)^a \nonumber\\
   & = & O(r^2 n^{-1} \ln^2 n).
   \end{eqnarray}
   Now
   \[\E\left[\prod_{s=1}^r \bar{\phi_s}(Y)\right]=
   n^{-r} \E \left[\prod_{s=1}^r \sum_{j=1}^n \phi_s (Y(j))\right]
   = \E\left[\prod_{s=1}^r \phi_s (Y(s))\right] +O(r^2 n^{-1})\]
   uniformly over $r$ and all $r$-tuples $\phi_1, \ldots,\phi_r \in {\cal G}_1$
   since when we expand the middle expression there are at most
   $r^2 n^{r-1}$ terms for which the values of $j$ are not all
   distinct. Hence from~(\ref{eq.chaos.2}), uniformly over all
   positive integers $r \leq n$,
   \begin{equation} \label{eq.dtv}
   \sup_{\phi_1, \ldots,\phi_r \in {\cal G}_1}
   | \E [\prod_{s=1}^r \phi_s (Y(s))]
   - \prod_{s=1}^r \hat{\phi_s} | =O(r^2 n^{-1}\ln^2 n).
   \end{equation}
   (For $r > n^{\frac12}/ \ln n$ the result is trivial.)
   Further, by Proposition~\ref{prop.distql}(a),
   \[ \sup_{\phi \in {\cal G}_1}
   |\hat{\phi} - \E_{{\cal L}_{\lambda,d}} (\phi)|= O(n^{-1}
   \ln^2 n \ \ln \ln n),\]
   where $\E_{{\cal L}_{\lambda,d}} (\phi)$ denotes the expectation with
   respect to the limiting measure ${\cal L}_{\lambda,d}$.
   Thus
   \[ \sup_{\phi_1, \ldots,\phi_r \in {\cal G}_1}
   | \prod_{s=1}^r \hat{\phi}_s
   - \prod_{s=1}^r \E_{{\cal L}_{\lambda,d}} (\phi_s) |
   =O(r n^{-1} \ln^2 n \ \ln \ln n)\]
   uniformly over all positive integers $r$
   (again noting that the result is trivial for large $r$).
   It follows that, uniformly over all positive integers $r \leq n$
   \begin{equation} \label{eq.dtv-1}
   \sup_{\phi_1, \ldots,\phi_r \in {\cal G}_1}
   | \E [\prod_{s=1}^r \phi_s (Y(s))]
   - \prod_{s=1}^r \E_{{\cal L}_{\lambda,d}} (\phi_s) |
   =O(n^{-1}\ln^2 n \ (r^2+r\ln \ln n)).
   \end{equation}

   Following Sznitman~\cite{s91}, the vector $Y=(Y(1),\ldots,Y(n))$ is chaotic (in
   total variation) since the left hand side of~(\ref{eq.dtv}) tends to
   0 as $n \to \infty$ for each fixed positive integer $r$.
   Thus~(\ref{eq.dtv}) quantifies the chaoticity of the equilibrium queue lengths
   $Y$ in terms of this definition.
   Similarly, the inequality~(\ref{eq.dtv-1}) quantifies $Y$ being ${\cal
   L}_{\lambda,d}$-chaotic.
   Let us note that, up to factors logarithmic in $n$, the bound in~(\ref{eq.dtv-1}) is
   of the same order as the time-dependent bound of Theorem 4.1
   in~\cite{gm94} obtained for a related class of models.

   The results~(\ref{eq.dtv}) and~(\ref{eq.dtv-1}) yield bounds
   on the total variation distance between the joint law of
   $Y(1),\ldots,Y(r)$ and the product law ${\cal L}(Y(1))^{\otimes r}$,
   and between the joint law of $Y(1),\ldots,Y(r)$ and the product law
   ${\cal L}_{\lambda,d}^{\otimes r}$ respectively, as follows.
   [In general, even for random variables $Y(s)$ taking
   values in $\ints_+$ and $r=2$, it does not follow that if the left hand side
   of~(\ref{eq.dtv}) tends to 0 as $n \to \infty$ then
   the total variation distance between the joint law of
   $Y(1),\ldots , Y(s)$ and the product law ${\cal L}(Y(1))^{\otimes r}$
   tends to 0.]
   Putting $\phi_s$
   as the indicator of the set $\{k_s\}$,
     we obtain, uniformly over positive integers $r \leq n$
  \begin{equation} \label{eq.chaos.3}
   \sup_{k_1,\ldots,k_r} \left| \pr(\land_{s=1}^r (Y(s)=k_s))
   -\prod_{s=1}^r \pr (Y(s)=k_s)\right| = O(r^2 n^{-1}\ln^2 n )
   \end{equation}
   and
  \begin{equation} \label{eq.chaos.4}
   \sup_{k_1,\ldots,k_r} \left| \pr(\land_{s=1}^r (Y(s)=k_s))
   -\prod_{s=1}^r {\cal L}_{\lambda,d}(\{k_s\})\right|
   = O(n^{-1}\ln^2 n \ (r^2 + r \ln \ln n) ).
   \end{equation}
   But by~(\ref{eqn.ub3}), there exists a constant $c>0$ such that
   \[ \pr (\max_j Y(j) > {\ln \ln n}/ {\ln d} +c ) =O(n^{-1}).\]
   Hence in total variation distance, for any fixed $\epsilon > 0$,
     the joint law of $Y(1), \ldots, Y(r)$ differs from the product law
   ${\cal L}(Y(1))^{\otimes r}$
   by at most $O(n^{-1}\ln^2 n \ ((1+\epsilon) \ln \ln n/ \ln d)^r )$, and by at
   most $O(n^{-1}\ln^2 n \ ((1+\epsilon) \ln \ln n/ \ln d)^{r+1} )$
   from the law ${\cal L}_{\lambda,d}^{\otimes r}$.
   \end{proofof}


  \subsection{Non-equilibrium case}

   We now no longer assume that the system is in equilibrium,
   and show that under quite general exchangeable initial
   conditions, chaotic behaviour occurs in the system,
   uniformly for all times.
   We need to prove Theorem~\ref{cor.chaos2}.
   We first state two general results,
   Theorem~\ref{thm.chaos2} and~\ref{cor.chaos1}, and note that
   Theorem~\ref{cor.chaos2} will follow easily from the latter.
   We then prove Theorem~\ref{thm.chaos2} and deduce Theorem~\ref{cor.chaos1}.

   Given an $n$-queue process $(X_t^{(n)})$ and a positive integer $r \leq n$,
   let ${\cal L}_t^{(n,r)}$ denote the joint law of
   $X_t^{(n)}(1),\ldots, X_t^{(n)}(r)$, and let $\tilde{\cal L}_t^{(n,r)}$
   denote the product law of $r$ independent copies of $X_t^{(n)}(1)$.
   The following result shows that, as long as initially there are
   not too many customers in the system, the maximum queue is not too
   long and the system has sufficient concentration of measure, there
   will be chaotic behaviour uniformly for all times.
   \begin{theorem} \label{thm.chaos2}
   For each sufficiently large constant $c_1$ and each constant
   $c_0$, there is a constant $c$ such that the following holds.

   Let $n$ be a positive integer.
   Assume that the law of  $X_0^{(n)}$ is exchangeable
   (which implies that the law of $X_t^{(n)}$ is exchangeable for all $t
   \geq 0$).
   Let $m \geq 0$,
   let
   \[\delta_{n,m} = \pr (\none{X_0^{(n)}}>c_0 n ) + \pr ( \ninf{X_0^{(n)}} > m),\]
   and let
   \[\gamma_n = \sup_{f} \pr (|f(X^{(n)}_0) - \E[f(X^{(n)}_0)]|
   \ge (c_1 n \ln n)^{1/2} ),\]
   where the supremum is over all non-negative functions $f \in {\cal
   F}_1$
   bounded above by $n$.
   Then for each positive integer $r \leq n$ we have
   \[ \sup_{t \ge 0} \dtv \left( {\cal L}_t^{(n,r)}, \tilde{\cal L}_t^{(n,r)} \right)
   \leq c [ (n^{-1}\ln n)(m+\ln n)+ \gamma_n] (m+2\ln\ln n)^r+(r+1)\delta_{n,m}.\]
   \end{theorem}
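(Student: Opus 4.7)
The plan is to mirror the proof of Theorem~\ref{thm.chaos1}: first obtain concentration of 1-Lipschitz functionals $f(X_t^{(n)})$ uniformly over $t\ge 0$, then run the Sznitman-style expansion of $\E[\prod_s\bar\phi_s(X_t)]$ around $\prod_s\hat\phi_{s,t}$ (where $\bar\phi(y)=n^{-1}\sum_i\phi(y_i)$ and $\hat\phi_{s,t}=\E[\phi_s(X_t(1))]$), and finally pass to total variation by summing over a window of values and truncating using a bound on the maximum queue length. Every estimate has to carry the parameter $m$, and the delicate point is the non-equilibrium concentration.

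For the concentration, I aim to show that for each $f\in{\cal F}_1$ with $0\le f\le n$ and each $t\ge 0$,
\[\pr\bigl(|f(X_t)-\E[f(X_t)]|\ge K(n(m+\ln n)\ln n)^{1/2}\bigr)\le\gamma_n+\delta_{n,m}+o(n^{-2}),\]
for a sufficiently large constant $K$. Set $t_0=\eta^{-1}(m+\ln n)$ with $\eta$ from Lemma~\ref{thm.stat}. For $t\le t_0$ I condition on $X_0$ and apply Lemma~\ref{lem.conc2}: at the chosen deviation, $n\exp(-cu^2/(nt_0+u))=o(n^{-2})$ once $K$ is large enough. Since $x_0\mapsto\E[f(X_t)\mid X_0=x_0]$ is 1-Lipschitz with values in $[0,n]$ by Lemma~\ref{lem.detcoupling}, the hypothesis on $\gamma_n$ controls its fluctuation about $\E[f(X_t)]$. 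For $t>t_0$, Lemma~\ref{thm.stat} gives $\dtv({\cal L}(X_t),{\bf \Pi})\le\delta_{n,m}+ne^{-\eta t_0}+2e^{-\eta n}$, and a coupling of $X_t$ with equilibrium $Y$ together with Lemma~\ref{lem.conca} applied to $f(Y)$ closes the bound.

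With this in hand I follow Theorem~\ref{thm.chaos1}. Applying the concentration to $\tfrac{n}{2}(\bar\phi_s+1)$ (which lies in ${\cal F}_1$ with range $[0,n]$) yields, for $|A|\ge 2$,
\[\E\!\left[\prod_{s\in A}|\bar\phi_s(X_t)-\hat\phi_{s,t}|\right]=O\!\left(\bigl((n^{-1}\ln n)(m+\ln n)+\gamma_n+\delta_{n,m}\bigr)^{|A|/2}\right),\]
where the exceptional event is handled by the trivial bound $|\bar\phi_s-\hat\phi_{s,t}|\le 2$. Expanding $\E[\prod_s\bar\phi_s(X_t)]-\prod_s\hat\phi_{s,t}$ over $A\subseteq\{1,\ldots,r\}$ with $|A|\ge 2$ as in~(\ref{eq.chaos.1})--(\ref{eq.chaos.2}), replacing $\E[\prod_s\bar\phi_s(X_t)]$ by $\E[\prod_s\phi_s(X_t(s))]$ at cost $O(r^2/n)$ using exchangeability, and specialising $\phi_s=\ind_{\{k_s\}}$, I obtain
\[\sup_{k_1,\ldots,k_r}\Bigl|\pr\!\bigl(\land_{s=1}^r(X_t(s)=k_s)\bigr)-\prod_{s=1}^r\pr(X_t(s)=k_s)\Bigr|=O\!\bigl((n^{-1}\ln n)(m+\ln n)+\gamma_n\bigr).\]
Summing over $k_1,\ldots,k_r\in\{0,1,\ldots,\lfloor m+2\ln\ln n\rfloor\}$ produces the $(m+2\ln\ln n)^r$ factor. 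The remaining tail $\pr(\exists s\le r:X_t(s)>m+2\ln\ln n)$ is bounded by coupling $X$ with a process $\tilde X$ starting at ${\bf 0}$: Lemma~\ref{lem.detcoupling}(a) gives $\ninf{X_t}\le m+\ninf{\tilde X_t}$ on $\{\ninf{X_0}\le m\}$, and a further coupling of $\tilde X$ with a stationary $Y$ using $\tilde X_0={\bf 0}\le Y_0$ and Lemma~\ref{lem.detcoupling}(b) gives $\pr(\ninf{\tilde X_t}>2\ln\ln n)\le\pr(M>2\ln\ln n)=n^{-\omega(1)}$ by~(\ref{eqn.ub3}). The tail contribution $r\delta_{n,m}+o(n^{-c})$, together with the $\delta_{n,m}$ from the concentration step, gives the $(r+1)\delta_{n,m}$ in the statement.

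The main obstacle is the time-uniform concentration: Lemma~\ref{lem.conc2} degrades with $t$, forcing the cut-off $t_0\asymp m+\ln n$ and thus enlarging the deviation scale from the equilibrium value $\sqrt{\ln n/n}$ (Lemma~\ref{lem.conca}) to $\sqrt{(m+\ln n)\ln n/n}$. It is precisely this enlarged scale that produces the $(n^{-1}\ln n)(m+\ln n)$ factor in the final bound, and arranging constants so that both the Lemma~\ref{lem.conc2} tail and the Lemma~\ref{thm.stat} mixing error stay $o(n^{-2})$ at the cut-off is the delicate technical step.
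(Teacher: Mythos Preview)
Your overall plan matches the paper's: use Lemma~\ref{lem.conc2} plus the assumption on $\gamma_n$ to get a concentration bound with deviation scale $((1+t)n^{-1}\ln n)^{1/2}$, run the Sznitman expansion as in Theorem~\ref{thm.chaos1}, specialise to indicators, and truncate the sum over $k_1,\ldots,k_r$ at $m+O(\ln\ln n)$ using the maximum-queue bound. The one structural difference is how you handle large $t$. The paper does \emph{not} try to establish a single time-uniform concentration estimate. Instead it splits at $t_0=\eta^{-1}(m+2\ln n)$: for $t\le t_0$ it runs the Sznitman argument with the time-dependent concentration (error $o(n^{-2})+\gamma_n$, no $\delta_{n,m}$), evaluating the $(1+t)$ factor at $t_0$ to produce the $(m+\ln n)$ in the final bound; for $t>t_0$ it uses the triangle inequality $\dtv({\cal L}_t^{(n,r)},\tilde{\cal L}_t^{(n,r)})\le \dtv({\cal M}^{(n,r)},\tilde{\cal M}^{(n,r)})+(r+1)\dtv({\cal L}(X_t),{\bf\Pi})$, invoking Theorem~\ref{thm.chaos1} for the first term and Lemma~\ref{thm.stat} for the second. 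That is where the clean $(r+1)\delta_{n,m}$ term comes from.

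Your route through a time-uniform concentration bound has a genuine gap at $t>t_0$. Coupling $X_t$ with $Y$ and applying Lemma~\ref{lem.conca} gives concentration of $f(X_t)$ around $\E[f(Y)]$, not around $\E[f(X_t)]$; since $0\le f\le n$, the recentering error $|\E[f(X_t)]-\E[f(Y)]|$ is only bounded by $2n\,\dtv({\cal L}(X_t),{\bf\Pi})\approx 2n\delta_{n,m}$, which can swamp your deviation scale $K(n(m+\ln n)\ln n)^{1/2}$ unless $\delta_{n,m}$ is already very small. Even if you repair this, feeding $\delta_{n,m}$ into the concentration error and then through the Sznitman sum and the $(m+2\ln\ln n)^r$-fold truncation sum multiplies $\delta_{n,m}$ by an $r$-exponential factor rather than just $(r+1)$, so you would not recover the stated bound. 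The fix is exactly the paper's: do not push $\delta_{n,m}$ through the expansion at all, but for $t>t_0$ compare directly with the equilibrium chaos via Theorem~\ref{thm.chaos1} and the mixing bound.
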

   We shall see later that it is
   straightforward to deduce Theorem~\ref{thm.chaos1} from
   Theorem~\ref{thm.chaos2}.
   Theorem~\ref{cor.chaos1} below
   is a straightforward consequence of
   Theorem~\ref{thm.chaos2} and shows that, in particular,
   there is chaotic behaviour
   uniformly for all times $t \ge 0$,
   when the initial state is obtained by perturbing a ``nice'' queue-lengths
   vector by a set of `small' independent random variables.
   \begin{theorem} \label{cor.chaos1}
   Let $c_0>0$, $s_0>0$ and $\alpha >0$ be any constants.
   Then there is a constant $c>0$ such that the following holds.

   Suppose that the initial
   state $X^{(n)}_0$ is obtained as follows.
   Take $x \in \ints_+^n$ with $\none{x} \leq c_0 n$ and let
   $m = \ninf{x}$. Let the `perturbations'
   $Z_1,\ldots,Z_n$ be independent random variables each taking
   integer values, each with variance at most $\alpha$ and each
   satisfying $\E[e^{s_0 Z_j}] \leq \alpha$.
   Let $\tilde{Z}=(\tilde{Z}_1,\ldots,\tilde{Z}_n)$
   where $\tilde{Z}_j= (x(j)+Z_j)^+$.
   Finally let $X^{(n)}_0$ be obtained from $\tilde{Z}$ by
   performing an independent uniform random permutation of the $n$
   co-ordinates.

   Then for each $t \ge 0$, $X^{(n)}_t$ is exchangeable, and
   for each positive integer $r \leq n$ we have
   we have
   \begin{eqnarray*}
   \sup_{t \ge 0} \dtv
   \left( {\cal L}_t^{(n,r)}, \tilde{\cal L}_t^{(n,r)} \right)
   & \leq & c n^{-1} (m+ 2 \ln n)^{r+2}.
   \end{eqnarray*}
   \end{theorem}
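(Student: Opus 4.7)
The plan is to deduce Theorem~\ref{cor.chaos1} directly from Theorem~\ref{thm.chaos2}, following the same scheme used to derive Theorem~\ref{cor.distq2} from Theorem~\ref{thm.distq2}. Exchangeability of $X_t^{(n)}$ for every $t \geq 0$ is immediate from the uniform random permutation in the construction of $X_0^{(n)}$. The substantive work is to verify the three tail and concentration hypotheses of Theorem~\ref{thm.chaos2} with a suitable choice of its parameter $m$ (which I rename $m^\star$ to avoid clash with $m=\ninf{x}$), and then to simplify.

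First I would handle the two tail conditions. Since $\ninf{X_0^{(n)}} \leq m + \max_j Z_j$, Markov's inequality together with the moment generating function hypothesis $\E[e^{s_0 Z_j}] \leq \alpha$ and a union bound give $\pr(\max_j Z_j > K \ln n) \leq \alpha\,n^{1-s_0 K}$, so setting $m^\star = m + K \ln n$ for $K = K(s_0, L)$ large enough yields $\pr(\ninf{X_0^{(n)}} > m^\star) = O(n^{-L})$ for any desired $L$. For the $\ell_1$-norm, $\none{X_0^{(n)}} \leq c_0 n + \sum_j Z_j^+$; conditioning on the very-high-probability event $A = \{\max_j Z_j \leq \ln^2 n\}$ so that each $Z_j^+$ is bounded by $\ln^2 n$ with conditional variance $O(1)$, Bernstein's inequality (Lemma~\ref{lem.bernstein}) gives exponential concentration of $\sum_j Z_j^+$ around its linear mean, hence $\pr(\none{X_0^{(n)}} > c_0' n) = O(n^{-L})$ for a slightly enlarged constant $c_0'$.

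For the concentration quantity $\gamma_n$ the argument is as in the proof of Theorem~\ref{cor.distq2}. On $A$, each $\tilde Z_j$ has conditional variance at most $2\alpha$ and range $O(m + \ln^2 n)$. For any $f \in {\cal F}_1$ with $0 \leq f \leq n$ and any fixed permutation $\pi$, the function $z \mapsto f(\pi z)$ is also 1-Lipschitz, so conditioning on $\pi$ and applying Lemma~\ref{lem.bernstein} to the independent coordinates of $\tilde Z$ yields Gaussian-type concentration of $f(X_0^{(n)})$ around $\E[f(X_0^{(n)}) \mid \pi, A]$ with variance parameter $O(n)$; the residual variation of the conditional mean over $\pi$ is controlled by standard concentration for random permutations, or equivalently by the boundedness $f \leq n$ combined with the tiny $\pr(\bar A)$. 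Taking $c_1$ large enough then gives $\gamma_n = O(n^{-L})$ for any $L$.

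Substituting $m^\star$, $\delta_{n, m^\star} = O(n^{-L})$ and $\gamma_n = O(n^{-L})$ into Theorem~\ref{thm.chaos2} gives a main contribution bounded by
\[
c\,(n^{-1} \ln n)(m^\star + \ln n)(m^\star + 2\ln\ln n)^r,
\]
which, using $m^\star + \ln n = O(m+\ln n)$, $m^\star + 2\ln\ln n = O(m + \ln n)$ and $\ln n \leq m + \ln n \leq m + 2\ln n$, simplifies to $O\bigl(n^{-1}(m + 2\ln n)^{r+2}\bigr)$; the additive error $(r+1)\delta_{n,m^\star} = O(r n^{-L})$ is absorbed by choosing $L$ large. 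The main obstacle is the concentration step for $\gamma_n$, since $X_0^{(n)}$ is not a vector of independent coordinates (the random permutation creates symmetric dependence) and so Bernstein cannot be applied directly: one must condition on $\pi$ and separately control the $\pi$-variation of the conditional mean, exactly as in the proof of Theorem~\ref{cor.distq2}.
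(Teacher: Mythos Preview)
Your overall strategy matches the paper's: verify the hypotheses of Theorem~\ref{thm.chaos2} with the parameter $m$ there replaced by $m^\star = m + O(\ln n)$ and $c_0$ enlarged, then simplify. For the $l_1$ tail the paper uses a one-line Chernoff bound (from $\E[e^{s_0 Z_j^+}] \leq 1 + \alpha$) rather than truncation plus Bernstein; both work, and only $O(n^{-1})$ is needed for $\delta_{n,m^\star}$ and $\gamma_n$.

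The substantive gap is in your handling of $\gamma_n$. You correctly flag the random permutation as the obstacle, but neither of your two proposed resolutions works, and neither is what the proof of Theorem~\ref{cor.distq2} actually does. Conditioning on the permutation $\Pi = \sigma$ and applying Bernstein concentrates $f(\sigma\tilde Z)$ around $\mu_\sigma := \E[f(\sigma\tilde Z)]$, but you then need $|\mu_\sigma - \E_\Pi[\mu_\Pi]| \leq \frac12(c_1 n \ln n)^{1/2}$ for most $\sigma$; the map $\sigma \mapsto \mu_\sigma$ has transposition-Lipschitz constant of order $m + \ln^2 n$ on the event $A$, so standard concentration for random permutations yields an exponent of order $-\ln n / (m+\ln^2 n)^2 = o(1)$, which is useless. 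Your alternative (boundedness of $f$ plus $\pr(\bar A)$ small) controls only the truncation error $|\E[f(X_0)\mid A] - \E[f(X_0)]|$, not the $\Pi$-variation. The paper's actual resolution --- the instruction in the proof of Theorem~\ref{cor.distq2} to ``ignore the last step \ldots\ which involves the random permutation'' --- is simpler: in the proofs of Theorems~\ref{thm.distq2} and~\ref{thm.chaos2} the supremum defining $\gamma_n$ is only ever invoked for functions of the form $x \mapsto \E[\bar\phi(X_t)\mid X_0 = x]$ (respectively $x \mapsto \E[\ell(k, X_t)\mid X_0 = x]$), and these are permutation-invariant because $\bar\phi$, $\ell(k,\cdot)$ and the supermarket dynamics all are. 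For such symmetric $f$ one has $f(X_0) = f(\tilde Z)$ identically, so Lemma~\ref{lem.bernstein} applies directly to the independent coordinates $\tilde Z_j$ and yields the required bound on $\gamma_n$ with no permutation issue at all.
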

   Note that Theorem~\ref{cor.chaos2} will follow from this last
   result on taking $x$ as the zero vector.  It remains to prove
   Theorems~\ref{thm.chaos2} and~\ref{cor.chaos1}.
   \medskip


   \begin{proofof}{Theorem~\ref{thm.chaos2}}
   First we deal with small $t$.
   We start by arguing in a similar way to the proofs of
   Theorems~\ref{thm.distq2} and~\ref{thm.chaos1}.
   As in the inequality~(\ref{eqn.bound}),
   there exists a constant $c_1>0$ such that uniformly over $t \geq 0$,
   over $n$, and over all non-negative functions $f \in {\cal F}_1$
   bounded above by $n$,
   we have
   \[ \pr (| n^{-1}f(X_t) -\E [n^{-1}f(X_t)]| \ge (c_1 (1+t) n^{-1} \ln n)^{1/2})
   \leq o(n^{-2}) + \gamma_n.\]
   With notation as in the proof of Theorem~\ref{thm.chaos1},
   since the distribution of $X^{(n)}_t$ is exchangeable, we have
   $\E[\bar{\phi}(X^{(n)}_t)] = \E[\phi(X^{(n)}_t(1))]$: let us call this mean value
   $(\hat{\phi})_t$.
   Uniformly over integers $a$ and $r$
   with $2 \leq a \le r$,
   \[ \sup_{\phi_1, \ldots , \phi_a \in {\cal G}_1} \left [
   \E\prod_{s=1}^a |\bar{\phi}_s(X_t^{(n)})-(\hat{\phi}_s)_t | \right]
   \leq  O( (1+t) n^{-1} \ln n) + r \gamma_n,\]
   Using the above bound and following the steps in the proof of
   Theorem~\ref{thm.chaos1}, we find that uniformly over $r$
   \[ \sup_{\phi_1, \ldots , \phi_r \in {\cal G}_1}
   \left| \E \left [\prod_{s=1}^r \phi_s (X^{(n)}_t(s)) \right ]
   -\prod_{s=1}^r (\hat{\phi_s})_t \right|\]
   \[ = O\left (r^2 [(1+t) n^{-1}\ln n +  r \gamma_n ] \right ).\]

   Let $M^{(n)}$ denote the maximum queue length in equilibrium.
   Then, arguing as in the proof of Theorem~\ref{thm.distq2}, given
   that $\ninf{X^{(n)}_0} \le m$,
   we have by Lemma~\ref{lem.detcoupling} and~(\ref{eqn.ub3}) that
   \[ \pr (\ninf{X_t^{(n)}} \geq m+k) \leq \pr(M^{(n)} \geq k) = O(n^{-1})\]
   if $k = \lfloor \ln\ln n / \ln d \rfloor + c_2$ for some suitable constant
   $c_2$.
   Hence, uniformly over $t \geq 0$, over $n$,
   over positive integers $2 \leq a \le r$,  over $m \geq 0$ and
   over exchangeable distributions for $X_0^{(n)}$, we have
   \begin{equation} \label{eq.chaos.5}
   \dtv \left( {\cal L}_t^{(n,r)}, \tilde{\cal L}_t^{(n,r)} \right)
   = O \left( [(1+t)  n^{-1} \ln n + \gamma_n ] (m+ 2\ln \ln n)^r \right)+\delta_{n,m}.
   \end{equation}
   The above takes care of all $t = O(m+\ln n)$.
   More precisely, we use the bound~(\ref{eq.chaos.5}) for
   $0 \leq t \leq \eta^{-1}(m+2 \ln n)$,
   where $\eta > 0$ is as in Lemma~\ref{thm.stat} with $c=c_0$.

   To handle larger $t$ we introduce the equilibrium distribution.
   Let $Y^{(n)}$ denote a queue-lengths vector in equilibrium.
   Denote by ${\cal M}^{(n,r)}$ the joint law of $Y^{(n)}(1),
   \ldots,Y^{(n)}(r)$; and denote by $\tilde{\cal M}^{(n,r)}$ the
   product law of $r$ independent copies of $Y^{(n)}(1)$.
   Then for $t \ge \eta^{-1}(m+2 \ln n)$ we have by Lemma~\ref{thm.stat}
   and Theorem~\ref{thm.chaos1} that
    \begin{eqnarray*}
   && \dtv ({\cal L}_t^{n,r},\tilde{\cal L}_t^{n,r})\\
   & \leq & \dtv ({\cal L}_t^{n,r},{\cal M}_t^{n,r})
   + \dtv ({\cal M}_t^{n,r},\tilde{\cal M}_t^{n,r})
   + \dtv (\tilde{\cal M}_t^{n,r},\tilde{\cal L}_t^{n,r})\\
   & \leq &  \dtv ({\cal M}_t^{n,r},\tilde{\cal M}_t^{n,r})
   + (r+1) \dtv ({\cal L}(X_t), {\cal L}(Y)) \\
   & \leq &  O(n^{-1} \ln^2 n(2 \ln \ln n)^r ) + (r+1)\delta_{n,m}.
   \end{eqnarray*}
   This completes the proof of Theorem~\ref{thm.chaos2}.
   \end{proofof}

   Let us indicate briefly how to deduce Theorem~\ref{thm.chaos1} from
   Theorem~\ref{thm.chaos2}. By Lemma~\ref{lem.march04} and
   equation~(\ref{eqn.ub3}), if we choose a constant
   $c_0 > \frac{\lambda}{1-\lambda}$ and
   $m = \ln \ln n /\ln d + \tilde{c}$
   for a sufficiently large constant $\tilde{c}$ then
   the term $\delta_{n,m}$ in Theorem~\ref{thm.chaos2} is $O(n^{-1})$.
   Further, by Lemma~\ref{lem.conca}, if the constant $c_1$ is
   sufficiently large then the term $\gamma_n$ is also $O(n^{-1})$,
   and Theorem~\ref{thm.chaos1} follows.

   \smallskip

   \begin{proofof}{Theorem~\ref{cor.chaos1}}
   We fix $s_0>0$ and $\alpha > 0$ such that
   $\E[e^{s_0 Z_j}]\le \alpha$ for each $j$.
   Let $\beta = \ln (1+ \alpha)$, so that $\beta >0$.
   Note that
   \[\none{X^{(n)}_0} \; = \; \none{\tilde{Z}} \; \leq \; \none{x} +
   \sum_{j=1}^{n} Z_j^+\]
   and
   \[ \E [e^{s_0 Z_j^+}] \leq 1 + \E [e^{s_0 Z_j}] \leq (1+\alpha) =
   e^{\beta}.\]
   Thus
   \begin{eqnarray*}
   \pr \left (\none{X_0} \geq (c_0 + \frac{2\beta}{s_0}) n \right )
   & \leq &
   \pr(\sum_{j=1}^{n} Z_j^+ \geq \frac{2\beta}{s_0} n)\\
   & \leq & e^{-2\beta n} \E[e^{s_0\sum_{j=1}^n Z_j^+}] \\
   & = & e^{-2\beta n} \prod_{j=1}^n \E[e^{s_0 Z_j^+ }] \; \le \; e^{-\beta n}.
   \end{eqnarray*}
   Also,
   \begin{eqnarray*}
   \pr \left (\ninf{X_0} > (m+\frac{2}{s_0} \ln n) \right )
   & \leq & \sum_{j=1}^n \pr (Z_j > \frac{2}{s_0} \ln n) \\
   & \leq  & e^{-2 \ln n} \sum_{j=1}^n \E[e^{s_0 Z_j}] = O(n^{-1}).
   \end{eqnarray*}
   Further, arguing as in the proof of Theorem~\ref{cor.distq2}, by
   Lemma~\ref{lem.bernstein}  the constant $c_1$ can
   be chosen large enough so that the term $\gamma_n$ in
   Theorem~\ref{thm.chaos2} is $O(n^{-1})$.
   Hence the result follows from
   Theorem~\ref{thm.chaos2} with this value of $c_1$, with $c_0$
   replaced by $c_0 +  \frac{2\beta}{s_0}$ and with $m$ replaced by $m +
   \frac{2}{s_0} \ln n$.
   We have
   \begin{eqnarray*}
   \sup_{t \ge 0} \dtv
   \left( {\cal L}_t^{(n,r)}, \tilde{\cal L}_t^{(n,r)} \right)
   & \leq &
   c n^{-1} \ln n (m+\ln n) (m + \ln n + 2 \ln \ln n)^r\\
   & \leq & c n^{-1} (m+ 2 \ln n)^{r+2}.
   \end{eqnarray*}

   \end{proofof}

   \smallskip

\noindent {\bf Acknowledgement} We would like to thank the referee
for detailed and helpful comments.


\end{document}